   \newif\ifpdf
\let\mathbb\mathds
\newcommand{\N}{\mathbb{N}} 
\newcommand{\C}{\mathbb{C}}
\newcommand{\teich}{\mathrm{Teich}}
\newcommand{\belf}{\mathrm{bel}(f)}
\newcommand{\Belf}{\mathrm{Bel}(f)}
\newtheorem{theo}{Theorem}  
\newtheorem{prop}{Proposition}   
\newtheorem{defi}{Definition} 
\newtheorem{lem}{Lemma} 
\newtheorem{rem}{Remark}
\newtheorem*{theoA}{Theorem A}
\newtheorem*{theoB}{Theorem B}
\newcommand{\dbar}{\overline{\partial}}
\newcommand{\rs}{\mathbb{P}^1}
\newcommand{\res}{\mathrm{Res}}
\newcommand{\ratd}{\mathrm{Rat}_d}
\newcommand{\limn}{\lim_{n \rightarrow \infty}}
\newcommand{\id}{\mathrm{Id}}
\newcommand{\critf}{\mathrm{Crit}(f)}
\newcommand{\mcal}{\mathcal{M}}
\newcommand{\ddf}{\Gamma(T\s)^*}
\newcommand{\qc}{\mathrm{QC}}
\newcommand{\defab}{\mathrm{Def}_A^B}
\newcommand{\s}{X}
\newcommand{\fatou}{\mathcal{F}}
\newcommand{\julia}{\mathcal{J}}
\newcommand{\modspace}{\mathrm{rat}_d}
\newcommand{\card}{\mathrm{card\,}}
\newcommand{\acal}{\mathcal{A}}
\newcommand{\ucal}{\mathcal{U}}
\title{Summability condition and rigidity for finite type maps}
\author{Matthieu Astorg}
\email{mastorg@umich.edu}
\address{University of Michigan, 530 Church St, 48109 Ann Arbor, MI}
\begin{document}
	
\begin{abstract}
	We give a bound on the dimension of the Teichmüller space of a finite type map in terms of the 
	number of summable singular values and finite singular orbits. This generalizes rigidity 
	results due to Makienko, Dominguez and Sienra, and Urbanski and Zdunik.
	We also recover a shorter proof of a transversality theorem due to Levin.
	Our methods are based on the deformation theory introduced by Epstein.
\end{abstract}

\maketitle

\section{Introduction and statement of the main theorems}

Finite type maps are a class of analytic maps $f: W \to X$ of complex 1-manifolds introduced and studied
by A. Epstein in \cite{epstein1993towers}. More precisely:

\begin{defi}
	An analytic map $f: W \to X$ of complex $1$-manifolds is of finite type if 
	\begin{itemize}
		\item $f$ is nowhere locally constant,
		\item $f$ has no isolated removable singularities,
		\item $X$ is a compact Riemann surface, and
		\item the singular set $S(f)$ is finite.
	\end{itemize}
\end{defi}

Here, the singular set $S(f)$ means the smallest set $S\subset X$ such that $f: W \backslash f^{-1}(S) \to X \backslash S$ is a covering map.

This class includes notably rational self-maps of $\rs$ and more generally ramified covers between compact Riemann surfaces,
as well as entire functions of the complex plane with a finite singular set, such as
the exponential family $f_\lambda(z)=\lambda e^z$. 
It also contains the so-called horn maps appearing in the theory of parabolic implosion,
as was proved by Buff, Ecalle and Epstein (see \cite{buff2013limits} and also \cite{epstein1993towers}).
When $W \subset X$, one can study the dynamics
of the map $f$, that is the orbits $(z, f(z), f^2(z)\ldots )$, for as long as $f^n(z) \in W$.
If $z \in W$ is such that for all $n \in \N$, $f^n(z)  \in W$, then we say that $z$ is non-escaping.
We may define the Fatou set $\fatou(f)$ of $f$ as the set of points $z \in W$ such that there exists a neighborhood $U$
of $z$ in $W$ such that either all points in $U$ escape, or the family $\{f_{|U}^n: W \to X, n \in \N\}$ is 
well-defined and normal. The Julia set is $\julia(f)=X-\fatou(f)$.

Epstein proved in his thesis \cite{epstein1993towers} several key results about the dynamics of these maps: they do not possess wandering domains,
their Julia set is never empty, and as for rational maps, we have a classification theorem for periodic Fatou 
components.
He also constructed so-called deformation spaces $\defab(f)$, which are defined abstractly 
through Teichmüller theory but may be
thought of as natural parameter spaces for $f$. 
These deformation spaces are finite-dimensional complex manifolds,
and one can describe their cotangent bundle in terms of quadratic differentials.

Moreover, if $f: W \to X$ is a finite type map, one can define the Teichmüller space of $f$,
denoted by $\teich(f)$.
This notion was first introduced by McMullen and Sullivan (\cite{mcmullen1998quasiconformal})
in the context of algebraic correspondances, and was specifically described in the case of rational maps.
Epstein studied in \cite{epstein1993towers} the case of the Teichmüller spaces of finite type maps.
 Again, this is a finite-dimensional complex manifold, and it comes 
equipped with a natural holomorphic
immersion into the deformation space (see \cite{astorg2014dynamical} for a detailed 
construction of $\teich(f)$ from first principles in the case where $f$ is a rational map, and the proof 
of the fact that $\teich(f)$ immerses into the moduli space).
Roughly speaking, the Teichmüller space of $f$ represents the topological conjugacy class of $f$:
the larger its dimension is, the more parameters of topological deformation exist for $f$.
If $\teich(f)$ is reduced to a point, then $f$ is said to be rigid: this implies that for every
quasiconformal homeomorphism $\phi: X \to X'$ mapping $W$ to $W' \subset X'$, if there is a holomorphic map $g$
making the following diagram commute:
$$
\xymatrix{
W \ar[r]^{f} \ar[d]_{\phi}& X \ar[d]^{\phi} \\ 
W' \ar[r]^{g} & X'  \\ 	
	}
$$
then $X$ must be biholomorphic to $X'$ and $\phi$ must be the composition of a biholomorphism $X \to X'$ and 
of a quasiconformal homeomorphism $X \to X$ commuting with $f$. Thus $g$ is a "trivial" deformation 
of $f$, as it is in fact biholomorphically conjugate to $f$.
On the other hand, if $\teich(f)$ has maximal dimension (that is, equal to the dimension of the moduli space),
then every nearby parameter is quasiconformally conjugate to $f$; we say that $f$ is structurally stable.

The question of describing when a rational map $f: \rs \to \rs$ is structurally stable or rigid
is equivalent to a central conjecture in holomorphic dynamics, which state that with the exception of 
one well-understood family (flexible Lattès maps), no rational map may have an invariant line field supported
on its Julia set.

In the series of papers  \cite{makienko2001remarks}, \cite{makienko2005remarks},
\cite{dominguez2002ruelle}, \cite{makienko2005poincare}, \cite{urbanski2007instability} and 
\cite{makienko2008remarks}, Makienko, Dominguez and Sienra, and Urbanski and Zdunik
proved that some sufficient expansion along 
at least one critical orbit was an obstruction to structural stability, and that expansion along 
\emph{all} critical orbits implied in fact rigidity. This was proved first for rational maps
 in \cite{makienko2001remarks}, under some unnecessary assumptions that were removed in  \cite{makienko2005remarks} (using ideas from \cite{levin2002analytic}), then for 
the exponential family in \cite{makienko2005poincare} and \cite{urbanski2007instability}, and at last for some subset of the class of entire functions with only 
finitely many singular values in \cite{dominguez2002ruelle}. Avila gave a different proof in 
\cite{avila2001infinitesimal} of this result in the case of rational maps.
In particular, all of these maps are of finite type. 
More precisely, this is what we mean by sufficient expansion:

\begin{defi}
	Let $f: W \to X$ be a  finite type map, with $W \subset X$, and let $z \in W$ be a non-escaping point.
	We say that $z$ is \emph{summable} if there is a Hermitian metric on $X$ such that the series
	\begin{equation*}
	\sum_{n \geq 0} \| Df^n(z)\|^{-1} 
	\end{equation*}
	is convergent.
\end{defi}
Note that by compacity of $X$, the choice of the metric does not matter.
This type of condition was first introduced by Tsujii (\cite{tsujii2000simple}) for 
real quadratic polynomials.
Stronger expansivity conditions (the so-called Collet-Eckmann condition) had previously been known 
to imply rigidity (see  \cite{przytycki1999collet}).

We will also need the following definitions before we can state our main results:

\begin{defi}
	A compact set $K \subset \s$ is called a $C$-compact if it satisfies the following property: 
	any continuous function
	on $K$  can be uniformly approximated  by restrictions of 
	functions that are holomorphic  on  a neighborhood of $K$.
\end{defi}

This condition, though not always satisfied by the Julia sets of rational maps, 
is relatively mild; it is in particular always satisfied by Julia sets of polynomials
(see \cite{levin2011perturbations}).

\begin{defi}
	Let $f: W \to X$ be a finite type map, with $W \subset X$. 
	\begin{itemize}
		\item Let $p(f)$ denote the number of singular values with a periodic or preperiodic orbit
		\item Let $s(f)$ denote the number of summable singular values with an infinite forward orbit,
		whose $\omega$-limit sets are $C$-compacts.
	\end{itemize}
\end{defi}

\begin{defi}
	Let $f : W \to X$ be a finite type analytic map. Then we say that $f$ is exceptional if either
	$f$ is an automorphism of $X$, or an endomorphism of a complex torus, or a flexible Lattès example.
\end{defi}

The following is the main result, and generalizes the aforementienned results of 
Avila, Makienko, Dominguez, Sienra, Urbanski and Zdunik:

\begin{theoA}
	Let $f: W \to X$ be a non-exceptional finite type analytic map, with $W \subset X$.
	We have: $$\dim \teich(f) \leq \card S(f) - p(f) - s(f).$$
\end{theoA}

In particular, if at least one singular value is summable with 
an $\omega$-limit set that is a $C$-compact, then $f$ is not structurally stable, 
and if all singular values either are summable with $C$-compacts 
as $\omega$-limit sets, or have finite orbit, then $f$ is rigid and therefore
does not have any invariant line field.

Our second result is a simplified proof of a theorem of Levin (\cite{levin2011perturbations}).
Before we state it, let us introduce some notations.

\begin{defi}
	Let $\ratd$ be the space of degree $d$ rational maps, and let $\modspace$ be 
	the quotient of $\ratd$ by the group of Möbius transformation acting by conjugacy.
	We will call $\ratd$ \emph{the parameter space} of degree $d$ rational maps, and
	$\modspace$ the \emph{moduli space} of degree $d$ rational maps.
	We denote by $\mathcal{O}(f)$ the orbit of $f$ under the action by conjugacy 
	of the group of Möbius transformations.
\end{defi}

The parameter space $\ratd$ is $2d+1$ dimensional complex manifold, and $\modspace$
is a $2d-2$ complex orbifold.
Denote by $\critf$ the critical set of $f$, i.e. the set of points $z$ where
$Df(z)=0$.
Let 
$\Delta \ni \lambda \mapsto f_\lambda$ be a holomorphic curve in $\ratd$ passing through
$f$ at $\lambda=0$.
Denote by $\dot f$ the section $\frac{df_\lambda}{d\lambda}_{\lambda=0}$ of the line bundle
$f^*T\rs$, and by $\eta$ the meromorphic vector field 
$$\eta(z) := Df^{-1}(z) \cdot \dot f(z).$$

Note that $\eta$ is holomorphic outside of $\critf$, and that 
its poles have order at most the order of the corresponding critical points of $f$.
Denote by $T(f)$ the vector space of meromorphic vector fields on $\rs$ satisfying this property, 
i.e. if $\eta \in T(f)$, then all the poles of $\eta$ are in $\critf$
and the pole at $c \in \critf$ of $\eta$ has order at most the order of $c$ as a critical point of $f$.

Then the map $\dot f \mapsto \eta = Df^{-1} \cdot (\dot f)$ induces a canonical isomorphism
between $T_f \ratd$ and $T(f)$ (indeed, this map is clearly injective and $\dim T(f)=2d+1=\dim \ratd$).

\begin{defi}
	Let $v$ be a summable critical value of a rational map $f: \rs \to \rs$. For any $\eta \in T(f)$, 
	denote by:
	$$\xi_\eta(v) := \sum_{k=0}^{\infty} (f^k)^*\eta(v) \in T_v \rs.$$
\end{defi}

Given an analytic submanifold $\Lambda \subset \ratd$ containing $f$, we say that the critical values
of $f$ move holomorphically on $\Lambda$ if for any critical value $v_i$ of $f$, there are 
holomorphic maps $\lambda \mapsto v_i(\lambda)$ defined on $\Lambda$ such that 
$v_i(\lambda)$ is a critical value of $f_\lambda$. In that case, given a tangent vector $\eta \in T_f\Lambda$, we denote by $\dot v_i$ the derivative at $f$ of $v_i$.

\begin{theoB}[see \cite{levin2011perturbations}]
	Let $f:\rs \to \rs$ be a rational map that is not a flexible Lattès map, with $s$ summable critical values $v_i$, $1 \leq i \leq s$, 
	such that for all $n \in \N^*$, $f^n(v_i) \notin V_f$. 
	Assume that either $f$ has no invariant line field, or that the $\omega$-limit set of those 
	$s$ summable critical values are $C$-compacts. Then there is a germ of analytic set $\Lambda$ in 
	$\ratd$ passing through $f$ and transverse to $\mathcal{O}(f)$ on which the critical values of $f$
	move holomorphically, and such that the linear map
	\begin{align*}
	\mathcal{V} : T_f \Lambda \subset T(f) &\rightarrow \bigoplus_{1 \leq i \leq s} T_{v_i} \rs   \\
	\eta &\mapsto (\dot v_i + \xi_\eta(v_i))_{1 \leq i \leq s}
	\end{align*}
	has maximal rank, i.e. equal to $s$.
\end{theoB}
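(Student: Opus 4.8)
\emph{The plan is to} reduce the statement to an intrinsic surjectivity assertion, then dualize it against quadratic differentials, where the summability hypothesis guarantees convergence of a transfer series and the two remaining hypotheses supply the rigidity needed to conclude. First I would record the infinitesimal meaning of $\mathcal V$. Writing $z_0=v_i$, $z_{n+1}=f(z_n)$ and differentiating $z_{n+1}(\lambda)=f_\lambda(z_n(\lambda))$ along the curve, one obtains, using $\dot f=Df\cdot\eta$ and $(f^k)^*\eta(v_i)=\eta(f^k(v_i))/Df^k(v_i)$, the telescoping identity
$$\frac{\dot z_{n+1}}{Df^{n+1}(v_i)} = \dot v_i + \sum_{k=0}^{n}(f^k)^*\eta(v_i).$$
Thus $\mathcal V(\eta)_i$ is the limit of the rescaled displacement of the $i$-th critical orbit, the series converging precisely by summability. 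I would also observe that $\dot v_i=\dot f(c_i)$ (as $Df(c_i)=0$) depends only on $\eta\in T(f)$, so $\mathcal V$ extends to an intrinsic linear map $\hat{\mathcal V}\colon T(f)\to\bigoplus_i T_{v_i}\rs$ independent of the chosen motion.

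Next I would show $\hat{\mathcal V}$ kills the tangent space to the orbit $\of$. For a holomorphic vector field $\theta$ on $\rs$ the orbit direction is $\eta=f^*\theta-\theta$, so $\xi_\eta(v_i)$ telescopes to $\lim_N (f^N)^*\theta(v_i)-\theta(v_i)=-\theta(v_i)$ (the limit vanishes by summability, $\theta$ being bounded), while $\dot v_i=\theta(v_i)$; hence $\hat{\mathcal V}(\eta)=0$. Since $f$ is non-exceptional, $\of$ is a smooth $3$-dimensional orbit. The $v_i$ are not critical points (as $f(v_i)$ is not a critical value, by the hypothesis), so the marked critical values can be followed holomorphically on any germ $\Lambda$ transverse to $\of$ of dimension $\dim\modspace$, and $T_f\Lambda$ is complementary to $T_f\of$. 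Consequently $\mathcal V(T_f\Lambda)=\hat{\mathcal V}(T(f))$, so the rank on every such slice equals $\operatorname{rk}\hat{\mathcal V}$, and it remains to prove $\hat{\mathcal V}$ has rank $s$, i.e.\ is surjective.

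I would prove surjectivity by duality. Suppose $\ell=(\ell_i)$, $\ell_i\in T^*_{v_i}\rs$, annihilates the image. Represent each $\ell_i$ by a quadratic differential $q_i$ with a simple pole at $v_i$, so $\ell_i(\eta(v_i))=\res_{v_i}(q_i\eta)$; here the hypothesis that the forward orbits avoid the critical values is what guarantees that each $(f^k)^*\eta$ is regular at $v_i$, making this residue representation valid. Using the adjunction between pullback of vector fields and pushforward of quadratic differentials,
$$\res\bigl(q\cdot (f^k)^*\eta\bigr)=\res\bigl((f^k)_*q\cdot\eta\bigr),$$
the annihilation condition becomes $\res(\Psi\eta)=0$ for all $\eta\in T(f)$, where $\Psi=\sum_i\sum_{k\ge 0}(f^k)_*q_i+R$, the transfer series converging by summability and $R$ (with poles at the $c_i$) encoding the $\dot v_i$-terms. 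With $Q=\sum_i q_i$ one has $\Phi-f_*\Phi=Q$ for $\Phi=\sum_k(f^k)_*Q$, and the role of $R$ is to absorb the poles of $Q$; I expect this to force $\Psi$ to be a global integrable meromorphic quadratic differential with $f_*\Psi=\Psi$, whose mass accumulates only on the orbit closures, hence on $\julia(f)$.

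\emph{The hard part will be} this last rigidity step. A nonzero integrable invariant quadratic differential yields an invariant line field $\overline\Psi/|\Psi|$ on $\julia(f)$: under the first hypothesis this directly contradicts the absence of an invariant line field, flexible Lattès maps being excluded precisely because they do carry such differentials. Under the second hypothesis I would instead exploit that each $\omega$-limit set is a $C$-compact: uniform approximation on a neighborhood by functions holomorphic there forces the pairing of $\Psi$ against a sufficiently rich family of test differentials to vanish, so that $\Psi$ carries no singular part on the $\omega$-limit sets and, by a normal-families argument controlling its orbit poles, must vanish identically. In either case $\ell=0$, so $\hat{\mathcal V}$ is surjective and $\mathcal V|_{T_f\Lambda}$ has rank $s$. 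The delicate points are the convergence and integrability of the transfer series near the $\omega$-limit set and the exact pole-matching making $\Psi$ invariant; these are exactly what summability and Epstein's deformation formalism are designed to control.
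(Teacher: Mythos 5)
Your overall strategy (dualize, run a transfer series from the summable critical values, conclude by the non-existence of a suitable invariant quadratic differential) is the right family of ideas, and your reduction to surjectivity of the intrinsic map $\hat{\mathcal V}$ on $T(f)$ --- via the telescoping computation showing that $\hat{\mathcal V}$ annihilates the orbit directions $\eta=f^*\theta-\theta$ --- is correct and cleanly organizes the transversality bookkeeping. However, the two steps you defer are exactly where the proof lives, and as written neither goes through. First, convergence of $\Phi=\sum_{k}(f^k)_*Q$ in $L^1$ does not follow from summability: summability controls only the residue of $(f^k)_*q_i$ at $f^k(v_i)$, namely $\res(q_i,v_i)/(f^k)'(v_i)$, while the remaining part of $(f^k)_*q_i$ has poles proliferating along $\bigcup_{j\le k}f^j(V_f)$ and an $L^1$ norm that is merely bounded by $\|q_i\|_{L^1}$ (pushforward is a weak contraction, not a strict one). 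The paper avoids this entirely: it pushes forward the functionals $u_i\in\ddf$ supported at $\{v_i\}$, whose norms $\|f_*^ku_i\|$ \emph{are} summable, and for each $k$ solves a $\dbar$-problem (Theorem \ref{th:resolutiondbar}) to get a $Z$-normalized differential $q_{i,k}$ with only four simple poles and $\|q_{i,k}\|_{L^1}\le C\|f_*^ku_i\|$; the series $q_i=\sum_kq_{i,k}$ then converges for free, and Lemma \ref{lem:calculsommable} identifies $\frac{1}{2i\pi}[\nabla_fq_i]\in Q(\rs,B)/\nabla_fQ(\rs,A)$ with your annihilating functional by a residue computation. Keeping $\sum_k(f^k)_*q_i$ forces you back to Levin's original convergence estimates, which is precisely the complication this paper is meant to bypass. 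Relatedly, the passage from ``$\ell$ annihilates the image'' to ``$\Psi$ is $f_*$-invariant'' is not automatic; in the paper it is the concrete statement that a nontrivial combination $\sum_i\lambda_i\nabla_fq_i$ would lie in $\nabla_fQ(\rs,A)$, so that $\sum_i\lambda_iq_i-\phi$ would be $f_*$-invariant for some $\phi\in Q(\rs,A)$.

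The second, more serious gap is in the rigidity step. A nonzero integrable $f_*$-invariant quadratic differential does \emph{not} in general yield an invariant line field on $\julia(f)$: by Proposition \ref{prop:suppjuliadiffquadinv} it may be carried entirely by a cycle of rotation annuli in the Fatou set, and such a differential contradicts neither alternative hypothesis of the theorem. Ruling this case out is the longest part of Lemma \ref{lem:qiindep}: the candidate invariant differential $q$ has $\dbar q$ equal to a convergent series of Dirac masses along the postcritical orbits (a measure of dimension $0$), whereas for a rotation annulus $\mathcal A$ the distribution $\dbar q_{\mathcal A}$ is absolutely continuous with respect to the harmonic measure of $\overline{\mathcal A}$, which never has dimension $0$; hence no nonzero multiple of $q_{\mathcal A}$ can occur. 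Only after this exclusion does your dichotomy apply: the invariant differential vanishes on $\Omega_f$, so either its nonvanishing on a positive-measure subset of the Julia set produces a genuine invariant line field (first alternative), or it is a regular differential supported in the $C$-compact closure of the orbits of the $v_i$, hence zero by Theorem \ref{th:makccomp} (second alternative) --- the latter being a direct Weyl's-lemma argument rather than a normal-families one. Without the rotation-annulus exclusion, your dual functional $\ell$ need not vanish and surjectivity of $\hat{\mathcal V}$ is not established.
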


Let us briefly give an interpretation of this result in the particular case where $f$ is a so-called
Misiurewicz map, meaning that for all $1 \leq i \leq s$, the critical values $v_i$ belong to a uniformly expanding set $K \subset \julia(f)$. In that case, it is a classical result that 
there is a dynamical holomorphic motion of $K$, meaning that there is a family of homeomorphisms 
$h_\lambda: K\to \rs$ such that $h_\lambda \circ f = f_\lambda \circ h_\lambda$ for all $f_\lambda$ 
close enough to $f$. Differentiating $h_\lambda$ with respect to $\lambda$, we obtain the 
relation 
$$\dot h \circ f = \dot f + Df \cdot \dot h,$$
which we may rewrite $f^*\dot h - \dot h = \eta$, where $f^*$ is the action of $f$ by pullback on vector 
fields. Since $f$ is uniformly expanding on $K$, we then obtain
$$\dot h(v_i)=-\sum_{n=0}^\infty (f^n)^*\eta(v_i)=-\xi_\eta(v_i).$$
Therefore, in the case of a Misiurewicz map, Theorem B is a transversality statement concerning the hypersurfaces $H_i:\{ v_i(\lambda) = h_\lambda(v_i) \}$: it implies (together with the submersion lemma)
 that the $H_i$ are smooth and intersect each other transversally at $f$
(that result was first proved by van Strien, see \cite{van2000misiurewicz}). 
In fact, it implies the stronger statement that the functions $v_i(\lambda)-h_\lambda(v_i)$, $1 \leq i \leq s$, 
form near $f$ a partial system of local coordinates.

In the more general setting of Theorem B, such a dynamical motion need not exist, yet 
the vector $\xi_\eta(v_i)$ remains well-defined. 
Another possible interpretation is then to see $\dot v_i + \xi_\eta(v_i)$ as a limit ratio between 
parameter and dynamical derivatives,
as seen from the following formula (see \cite{levin2011perturbations}):
$$\dot v_i + \xi_\eta(v_i) = \limn \frac{\frac{\partial}{\partial \lambda} f_\lambda^n(v_i(\lambda))}{(f^n)'(v_i)}.$$
This formula can be obtained by a direct calculation, noting that $\dot v_i = \dot f(c_i)$, 
where $c_i$ is a critical point such that $f(c_i)=v_i$.

\subsection*{Outline} In section \ref{sec:def}, we will recall some facts about the Teichmüller space and the deformation
space of a finite type map, as well as a description of their cotangent bundle and the immersion
of the Teichmüller space into the deformation space. In sections \ref{sec:ddf} and \ref{sec:injnabla}, we will prove some technical lemmas 
on quadratic differentials that will be useful to the proof of the main theorems. Finally, 
sections \ref{sec:proofA} and \ref{sec:proofB} are devoted to the proofs of Theorem A and Theorem B respectively.

\subsection*{Acknowledgements}The author is indebted to Adam Epstein for helpful conversations.

\section{Deformation spaces}\label{sec:def}

Recall the following objects from Teichmüller theory:

\begin{defi}
	Let $f: W \to X$ be a finite type map, and assume that $W \subset X$.
	Let us define:
	\begin{itemize}
		\item $\qc(f)$ the group of quasiconformal homeomorphisms $\phi$ of $X$ such that 
		$\phi \circ f = f \circ \phi$ wherever this equation is defined
		\item $\qc_0(f)$ the subgroup of $\qc(f)$ of those elements $\phi$ such that there exists
		a uniformly quasiconformal isotopy relatively to the ideal boundary of $X$
		$\phi_t \in \qc(f)$, $0 \leq t \leq 1$, with 
		$\phi_0=\phi$ and $\phi_1$ is the identity on $X$
		\item $\Belf$ is the space of Beltrami forms on $X$ that are invariant under $f$ on $W$,
		and vanish on $X-W$.
		\item The Teichm\"{u}ller space of $f$, denoted by $\teich(f)$, is defined as the quotient
		$\Belf/\qc_0(f)$
		\item $\mathrm{Bel}(X)$ is the space of all Beltrami forms on $X$
		\item $\mathrm{bel}(f)$ is the space of Beltrami differentials on $X$ invariant under $f$
		(a Beltrami differential is a Beltrami form for which we only assume that it has finite $L^\infty$ norm
		instead of norm less than one)
		\item $\mathrm{bel}(X)$ is the space of Beltrami differentials on $X$.
	\end{itemize}
\end{defi}

The reader unfamiliar with Teichmüller theory may find some background in \cite{hubbard2006teichmuller}
and \cite{gardiner2000quasiconformal}.
Notice that the Beltrami form that is identically zero gives a natural basepoint in 
$\teich(f)$.

\begin{defi}
	Let $W$ be a complex $1$-manifold, and let $A \subset W$ be a finite set. We denote by
	$Q(W)$ the space of integrable holomorphic quadratic differentials on $W$, and by
	$Q(W,A)$ the space of integrable meromorphic quadratic differentials on $W$, with at worst simple poles,
	whose poles are in $A$.
\end{defi}

We shall consider the natural action of a finite type map 
$f$ by pushforward on several types of objects.
The pushforward operator will always be denoted by $f_*$, and the pullback by $f^*$,
regardless of the object. Similarly, we will denote by $\nabla_f$ the operator $\id - f_*$, acting on quadratic differentials or on objects that we will consider in Section \ref{sec:ddf}.

If $f: \s_1 \to \s_2$ is a holomorphic map between Riemann surfaces, and $q$ is 
a holomorphic quadratic differential on $\s_2$, then the pullback $f^* q$ defined by
$$f^*q(x;v):= q (f(x); Df(x)\cdot v)$$
for $(x,v) \in T\s_1$ is a holomorphic quadratic differential on $\s_1$. If now 
$f: \s_1 \to \s_2$ is a holomorphic ramified cover, and $q$ is a holomorphic quadratic differential on 
$\s_1$, then the pushforward $f_* q$ is defined as 
$$(f_* q)(x) = \sum_i (g_i^* q)(x)$$
where $x \in \s_2 \backslash S(f)$ and the $g_i$ are the inverse branches of $f$. 
If $f$ has finite degree, the sum is always well-defined and $f_* q$ is a meromorphic quadratic 
differential on $X_2$, with poles at $S(f)$.

One can check that if $q$ is integrable, then $f_* q$ must also be integrable and well-defined even if 
$f$ has infinite degree, and that integrable holomorphic quadratic differentials on a finite type Riemann surface $X \backslash A$
(with $X$ compact and $A \subset X$ finite) are exactly the meromorphic quadratic differentials
on $X$ with at worst simple poles, all of which are in $A$.
Therefore, if $f: W \to X$ is a finite type map with $W \subset X$, the pushforward operator $f_*$ maps $Q(X,A)$ to $Q(X,f(A) \cup S(f))$.

Assume from now on that $f: W \to X$ is a finite type map with $W \subset X$.

\begin{theo}[\cite{epstein1993towers}, Corollary 9 p. 137]\label{th:structteich}
	The space $\teich(f)$ is a finite dimensional complex manifold. 
\end{theo}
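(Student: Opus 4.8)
The plan is to realize $\teich(f)=\Belf/\qc_0(f)$ as a finite-dimensional complex manifold by first putting a complex Banach structure on the numerator, then computing the tangent space at the basepoint and identifying its dual with a space of quadratic differentials to which Riemann--Roch applies. First I would regard $\belf$ as a complex Banach space (the $f$-invariant $L^\infty$ Beltrami differentials vanishing on $X-W$) and $\Belf$ as its open unit ball; by the measurable Riemann mapping theorem each $\mu\in\Belf$ is integrated by a quasiconformal homeomorphism $\phi^\mu$ of $X$ depending holomorphically on $\mu$, and the projection $\pi:\Belf\to\teich(f)$ is exactly the collapsing of $\qc_0(f)$-orbits. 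It then suffices to show that $\pi$ is, near the basepoint, a holomorphic submersion onto a finite-dimensional complex manifold: a change-of-basepoint argument (replacing $f$ by the conjugate finite type map $\phi^\mu\circ f\circ(\phi^\mu)^{-1}$ and using $\teich(f)\cong\teich(f^\mu)$) then transports the manifold structure to every point.

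The core step is the computation of $d\pi$ at the basepoint. Its kernel is the tangent space to the $\qc_0(f)$-orbit, i.e. the infinitesimally trivial directions $\dbar\xi$ for $f$-compatible vector fields $\xi$. To identify the quotient I would use the pairing $\langle\mu,q\rangle=\int_X\mu q$ between Beltrami differentials and integrable quadratic differentials together with the adjunction $\langle f^*\mu,q\rangle=\langle\mu,f_*q\rangle$. Since every $\mu\in\belf$ satisfies $f^*\mu=\mu$ on $W$, it annihilates $\nabla_f q=(\id-f_*)q$ for all $q$, while the trivial directions annihilate the holomorphic (hence $f_*$-fixed) differentials; this realizes the cotangent space $T^*_{[0]}\teich(f)$ as (a subspace of) the space of $f_*$-invariant integrable meromorphic quadratic differentials, that is $\{q\in Q(X,A):\nabla_f q=0\}$, where $A$ is the finite set built from the singular values and the finitely many cycle points permitted by the pole structure.

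Finite-dimensionality is now transparent: by the facts recalled before the theorem, $Q(X,A)$ consists of meromorphic quadratic differentials on the compact surface $X$ with at worst simple poles in the finite set $A$, so Riemann--Roch gives $\dim Q(X,A)=\max(0,3g-3+\card A)<\infty$ with $g$ the genus of $X$. The cotangent space, being contained in $Q(X,A)$, is finite-dimensional, hence so is $T_{[0]}\teich(f)$; compactness of $X$ together with the finiteness of $A$ is what forces the dimension to be finite.

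The remaining, and in my view hardest, point is upgrading this infinitesimal statement to a genuine complex manifold structure, which amounts to showing that the kernel of $d\pi$ is a closed subspace of $\belf$ of finite codimension admitting a closed complement; the implicit function theorem in Banach spaces then yields holomorphic charts. Closedness and finite codimension are equivalent to $\nabla_f$ having closed range with finite-dimensional cokernel, and this is where the dynamics genuinely enters. I would split $\belf=\mathrm{bel}(\fatou(f))^{f}\oplus\mathrm{bel}(\julia(f))^{f}$: the Fatou summand is controlled by the absence of wandering domains and the classification of periodic Fatou components (both established by Epstein and assumed here), which identify the grand-orbit quotient of $\fatou(f)$ with a finite type Riemann surface whose Teichmüller space is classically finite-dimensional, while the Julia summand of invariant line fields must be shown to contribute only finitely many dimensions. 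Establishing the closed range of $\nabla_f$ and the finite-dimensional bound on the Julia contribution is the step that cannot be reduced to classical Teichmüller theory and constitutes the main obstacle; once it is in place the local charts assemble, via the homogeneity above, into the asserted finite-dimensional complex manifold.
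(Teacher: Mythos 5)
The paper itself does not prove this statement --- it is quoted from Epstein's thesis --- so the relevant comparison is with the machinery of Theorem \ref{th:immteichtodef}, which identifies $T^*_{[0]}\teich(f)$ with the quotient $Q_f/\overline{\nabla_f Q_f}$ and realizes $\teich(f)$ as an immersed submanifold of the finite-dimensional deformation space $\defab(f)$. Your overall architecture (Banach structure on $\belf$, computation of $\ker D\pi(0)$, duality with integrable quadratic differentials, homogeneity to move the basepoint) is consistent with that scheme.

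There is, however, a genuine error at precisely the step you declare ``transparent''. The duality does \emph{not} identify the cotangent space with the kernel $\{q\in Q(X,A):\nabla_f q=0\}$; it identifies it with the \emph{cokernel} $Q_f/\overline{\nabla_f Q_f}$. The functionals on $\belf$ killing the trivial directions are represented by integrable quadratic differentials holomorphic on $\Omega_f$, two representatives giving the same functional exactly when they differ by an element of $\overline{\nabla_f Q_f}$ (this is what your adjunction computation actually shows); nothing forces a representative to be $f_*$-fixed, and in infinite dimensions $\ker\nabla_f$ and $\mathrm{coker}\,\nabla_f$ are unrelated. Here the discrepancy is maximal: by Epstein's infinitesimal Thurston rigidity (Proposition \ref{prop:injectnablaf}), $\nabla_f$ is \emph{injective} on $Q(X,A)$ for every non-exceptional $f$, so your candidate cotangent space is $\{0\}$ and your argument would prove that every non-exceptional finite type map is rigid --- false already for a rational map with a non-superattracting attracting cycle capturing a critical point of infinite orbit, whose basin contributes positive dimension to $\teich(f)$ (and for structurally stable maps, where $\dim\teich(f)=2d-2$). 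Consequently the Riemann--Roch count buys nothing: what must be bounded is the codimension of $\overline{\nabla_f Q_f}$ in $Q_f$, i.e.\ exactly the dynamical input (no wandering domains, classification of Fatou components, control of the Julia contribution) that your final paragraph correctly isolates but leaves unproved. As written, the portion presented as complete is the portion that fails, and the portion conceded as ``the main obstacle'' is the entire content of the theorem. The way around this in the present paper is to immerse $\teich(f)$ into $\defab(f)$, whose dimension $\card(B-A)$ \emph{is} computed by Riemann--Roch --- but on the quotient $Q(X,B)/\nabla_f Q(X,A)$ of finite-dimensional spaces of meromorphic differentials, where the injectivity of $\nabla_f$ works for you rather than against you.
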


\begin{defi}
	Let us denote by $\Lambda_f$ the union of the Julia set of $f$ and of the 
	closure of the grand orbit of $S(f)$ in $X$, and 
	$\Omega_f = X - \Lambda_f$. Also denote by $Q_f$ the space of integrable quadratic differentials on 
	$X$ that are holomorphic on $\Omega_f$.
\end{defi}

Notice that $\Omega_f \cap W$ is an open subset of the Fatou set of $f$; also note that 
when $\Lambda_f$ has zero Lebesgue measure, $Q_f$ is canonically isomorphic to $Q(\Omega_f)$.

\begin{defi}
	Let $f : W \to X$ be a finite type map. Let $A \subset W$ and $B \subset X$ be two finite sets.
	We say that $(A,B)$ is admissible for $f$ if:
	\begin{itemize}
		\item $A \subset B$
		\item $f(A) \subset B$
		\item $S(f) \subset B$
		\item if $X$ has genus $0$, then $\card A \geq 3$, and if $X$ has genus $1$, then 
		$\card A \geq 1$.
	\end{itemize}
\end{defi}

Let $f: W \to X$ be a finite type analytic map, with $W \subset X$.
If $(A,B)$ is admissible for $f$, then $A \subset B$, so we have a natural forgetful map
$\varpi: \teich(X,B) \to \teich(X,A)$.

Moreover, since $f(A) \subset B$ and $S(f) \subset B$, we have a well-defined pullback map
$\sigma_f : \teich(X,B) \to \teich(X,A)$ obtained by pulling back Beltrami forms from $X$ to $W$
using $f$, and then extending them by $0$ to the rest of $X$; this operation on Beltrami forms
descends to a holomorphic map from $\teich(X,B)$ to $\teich(X,A)$.

\begin{defi}[\cite{epstein1993towers}]
	We define $\defab(f)$ by:
	$$\defab(f)=\{ \tau \in \teich(X,B), \varpi(\tau) = \sigma_f(\tau)   \}.$$
\end{defi}

Note that once again, the zero Beltrami form on $X$ induces a natural basepoint in
$\defab(f)$.
From its definition, $\defab(f)$ is clearly an analytic set. But we can in fact say more:

\begin{theo}[\cite{epstein1993towers}]
	Let $f: W \to X$ be a non-exceptional finite type map, and let $(A,B)$ be admissible for $f$.
	Then $\defab(f)$ is a complex manifold of dimension $\card \left(B-A\right)$, whose cotangent space at 
	the basepoint canonically identifies with:
	$$Q(X,B)/\nabla_f Q(X,A).$$
\end{theo}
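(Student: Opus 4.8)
The plan is to realize $\defab(f)$ locally as a transverse preimage, so that both its dimension and its cotangent space fall out of a single duality computation, with the entire difficulty concentrated in the injectivity of $\nabla_f$. First I would assemble the two structure maps into one holomorphic map
\[
\Theta := (\varpi, \sigma_f): \teich(X,B) \longrightarrow \teich(X,A)\times\teich(X,A),
\]
so that by definition $\defab(f)=\Theta^{-1}(\Delta)$, where $\Delta$ denotes the diagonal. Both $\varpi$ and $\sigma_f$ fix the basepoint, and I would record their coderivatives there. Recall that the cotangent space of $\teich(X,C)$ at its basepoint is $Q(X,C)$ (Teichmüller's lemma, together with the identification of integrable quadratic differentials recalled above); the coderivative of the forgetful map $\varpi$ is then the inclusion $Q(X,A)\hookrightarrow Q(X,B)$, while the adjunction $\int_W (f^*\mu)\,q = \int_X \mu\,(f_*q)$ between pullback of Beltrami forms and pushforward of quadratic differentials identifies the coderivative of $\sigma_f$ with $f_*: Q(X,A)\to Q(X,B)$ (this indeed lands in $Q(X,B)$ because admissibility gives $f(A)\cup S(f)\subset B$).

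Second, I would observe that $\Theta$ is transverse to $\Delta$ at a point of $\defab(f)$ if and only if $D\varpi - D\sigma_f$ is surjective there, which by duality is equivalent to injectivity of $\nabla_f = \id - f_*$ on $Q(X,A)$. Granting this transversality, the implicit function theorem makes $\defab(f)$ a complex manifold of dimension $\dim\teich(X,B)-\dim\teich(X,A)=\card(B-A)$, using $\dim_{\C}\teich(X,C)=3g-3+\card C$ and the fact that the admissibility hypotheses ensure these Teichmüller spaces are genuine manifolds. For the cotangent space at the basepoint I would compute the conormal of $\Delta$, namely $\{(\alpha,-\alpha):\alpha\in Q(X,A)\}$, and pull it back by $\Theta$: since $\Theta^*(\alpha,-\alpha)=\varpi^*\alpha-\sigma_f^*\alpha=\nabla_f\alpha$, the conormal of $\defab(f)$ is $\nabla_f Q(X,A)$, whence the cotangent space is $Q(X,B)/\nabla_f Q(X,A)$, exactly as asserted.

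The hard part is the injectivity of $\nabla_f$, i.e. the nonexistence of a nonzero $q\in Q(X,A)$ with $f_*q=q$; this is where non-exceptionality is used. I would argue from the contraction estimate $\|f_*q\|_1=\int_X|f_*q|\le\int_X f_*|q|=\int_W|q|\le\|q\|_1$, valid for integrable quadratic differentials. If $f_*q=q$, then each inequality is an equality, and equality in the pointwise triangle inequality $|f_*q|=f_*|q|$ forces the local inverse branches of $f$ to pull $q$ back with coherent phases almost everywhere. For an integrable meromorphic differential with at worst simple poles, this rigidity can only occur when $f$ is an automorphism, a torus endomorphism, or a flexible Lattès map, i.e. exceptional; this equality-case analysis (the deformation-theoretic counterpart of the no-invariant-line-field phenomenon) is the genuine crux. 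Finally, to obtain transversality at an arbitrary point $\tau\in\defab(f)$ rather than only at the basepoint, I would invoke the homogeneity of deformation spaces: a neighborhood of $\tau$ is biholomorphic to a neighborhood of the basepoint of $\defab(f_\tau)$ for a quasiconformally conjugate finite type map $f_\tau$, which is again non-exceptional, so the basepoint computation applies verbatim.
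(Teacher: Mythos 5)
Your proposal is essentially the argument of Epstein that this theorem is imported from: the paper itself gives no proof and simply cites \cite{epstein1993towers}, and realizing $\defab(f)$ as $(\varpi,\sigma_f)^{-1}(\Delta)$, identifying the coderivatives of $\varpi$ and $\sigma_f$ with the inclusion $Q(X,A)\hookrightarrow Q(X,B)$ and with $f_*$ respectively, and reducing smoothness to the injectivity of $\nabla_f$ on $Q(X,A)$ by transversality/duality is exactly the standard route; the dimension count and the conormal computation giving $Q(X,B)/\nabla_f Q(X,A)$ are correct, as is the change-of-basepoint argument for smoothness away from the basepoint. The one caveat is that what you call the crux --- the nonexistence of a nonzero $q\in Q(X,A)$ with $f_*q=q$ for non-exceptional $f$ --- is precisely Proposition~\ref{prop:injectnablaf} of the paper (Epstein's Corollary~8), and your treatment of it is a sketch rather than a proof: the contraction $\|f_*q\|_1\le\|q\|_1$ and the reduction to the equality case are fine, but the equality-case analysis (coherent phases of the inverse branches forcing a parabolic orbifold, hence exceptionality) is a genuinely substantial theorem, especially for maps of infinite degree, and cannot be dispatched in one sentence. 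As a reduction of the stated theorem to that known rigidity result, your argument is complete and correct.
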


Let $\mathrm{Bel}(X)$ denote the space of Beltrami forms on $X$.
The identity map $\mathrm{Bel}(X) \to \mathrm{Bel}(X)$ descends to a natural holomorphic map
$i : \teich(f) \to \defab(f)$, mapping basepoint to basepoint.

The next theorem has been proved in \cite{astorg2014dynamical} in the case of rational maps, through
a new construction of the complex structure of $\teich(f)$ that bypasses the use of
certain sophisticated tools from Teichmüller theory. 
A. Epstein has a different (unpublished) proof.

\begin{theo}\label{th:immteichtodef}
	The  cotangent space at the
	basepoint of $\teich(f)$ canonically identifies with
	$Q_f/\overline{\nabla_f Q_f}.$
	Moreover, if $f$ is non-exceptionnal, $(A,B)$ is admissible for $f$ and 
	$B \subset \Lambda_f$, then
	the natural map $\Psi_T: \teich(f) \to \defab(f)$ is an immersion, 
	and the kernel of its codifferential
	at the basepoint is given by:
	$$\ker D\Psi_T(0)^* = \left( Q(X,B) \cap \overline{\nabla_f Q_f} \right)/\nabla_f Q(X,A) .$$
\end{theo}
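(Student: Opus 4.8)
\emph{Strategy of the proof.} The plan is to describe both cotangent spaces via the pairing between integrable quadratic differentials and Beltrami differentials, to identify the codifferential of $\Psi_T$ at the basepoint with the map induced by the natural inclusion $Q(X,B) \hookrightarrow Q_f$, and then to read off the immersion property and the kernel formula. Throughout, the relevant pairing is $\langle q, \mu\rangle = \int_X q\,\mu$, defined for $q$ an integrable quadratic differential in $L^1$ and $\mu$ a Beltrami differential in $L^\infty$. Since $\Psi_T$ is induced by the identity map of $\mathrm{Bel}(X)$, its differential at the basepoint is induced by the identity on invariant Beltrami differentials, so that the codifferential will be the adjoint of this inclusion with respect to the pairing.

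First I would establish the cotangent identification for $\teich(f)$. The tangent space at the basepoint is $\belf$ modulo the subspace of infinitesimally trivial invariant Beltrami differentials. Restricting the pairing gives a map $\belf \times Q_f \to \C$, and from the adjunction $\int_X (f_* q)\,\mu = \int_X q\,(f^*\mu)$ together with $f^*\mu = \mu$ on $W$ and $\mu \equiv 0$ on $X - W$, every $\mu \in \belf$ annihilates $\nabla_f Q_f$, hence its closure $\overline{\nabla_f Q_f}$. The pairing therefore descends to $\belf \times \left(Q_f/\overline{\nabla_f Q_f}\right) \to \C$, and the claim is that the induced map $Q_f/\overline{\nabla_f Q_f} \to (\belf)^*$ identifies the left-hand side with $T_0^*\teich(f)$. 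This is proved for rational maps in \cite{astorg2014dynamical}; the analytic core, which I expect to carry over to the finite type setting, is that the kernel of $Q_f \to (\belf)^*$ is \emph{exactly} $\overline{\nabla_f Q_f}$. I would prove this by Hahn--Banach: a class outside the closed subspace $\overline{\nabla_f Q_f}$ of the Banach space $Q_f$ is separated by a functional $\langle \cdot, \mu\rangle$ with $\mu \in L^\infty(X)$ vanishing on $\nabla_f Q_f$, i.e. with $\mu - f^*\mu$ annihilating $Q_f$; the work is to correct $\mu$ into a genuine element of $\belf$ representing the same functional, which relies on the absence of wandering domains and the classification of periodic Fatou components from \cite{epstein1993towers}.

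Next I would identify the codifferential and deduce the kernel formula. Because $B \subset \Lambda_f$, any $q \in Q(X,B)$ has its poles contained in $\Lambda_f$ and is thus holomorphic on $\Omega_f = X - \Lambda_f$, so $Q(X,B) \subset Q_f$; likewise $Q(X,A) \subset Q_f$, whence $\nabla_f Q(X,A) \subset \nabla_f Q_f \subset \overline{\nabla_f Q_f}$. Comparing the two pairings then shows that $D\Psi_T(0)^*$ is precisely the well-defined map $Q(X,B)/\nabla_f Q(X,A) \to Q_f/\overline{\nabla_f Q_f}$ sending $[q]$ to $[q]$. The kernel formula is then immediate:
$$\ker D\Psi_T(0)^* = \left\{\,[q] : q \in Q(X,B),\ q \in \overline{\nabla_f Q_f}\,\right\} = \left(Q(X,B) \cap \overline{\nabla_f Q_f}\right)/\nabla_f Q(X,A).$$

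The hard part will be the immersion itself, which is equivalent to the surjectivity of $D\Psi_T(0)^*$, that is, to the density statement $Q(X,B) + \overline{\nabla_f Q_f} = Q_f$. Dually, one must show that any $\mu \in \belf$ with $\int_X q\,\mu = 0$ for all $q \in Q(X,B)$ already satisfies $\int_X q\,\mu = 0$ for all $q \in Q_f$. My approach would be to transfer the restriction of a general $q \in Q_f$ to the Fatou part $\Omega_f \cap W$ onto $\Lambda_f \supset B$ modulo $\nabla_f$-coboundaries, using iterated pushforwards and a telescoping argument; the classification of periodic Fatou components and the absence of wandering domains should guarantee that, modulo $\overline{\nabla_f Q_f}$, these contributions are realized by quadratic differentials with poles confined to $B$. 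The non-exceptional hypothesis enters here to exclude the degenerate cases (automorphisms, torus endomorphisms, flexible Lattès maps) in which this transfer fails, and it is also what makes $\defab(f)$ a manifold of the expected dimension. This density is the substantive analytic input: it underlies the construction of the complex structure of $\teich(f)$ in \cite{astorg2014dynamical} for rational maps, and the finite type case requires redoing the argument allowing $f$ to have infinite degree.
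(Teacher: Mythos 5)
Your formal skeleton is sound and matches the paper where it overlaps: the pairing $\int_X q\,\mu$, the observation that $B\subset\Lambda_f$ forces $Q(X,B)\subset Q_f$ and $\nabla_f Q(X,A)\subset\overline{\nabla_f Q_f}$, the identification of $D\Psi_T(0)^*$ with the induced map $Q(X,B)/\nabla_f Q(X,A)\to Q_f/\overline{\nabla_f Q_f}$, and the resulting kernel formula are all correct and are exactly how the paper concludes. But you have inverted the logical order of the paper in a way that leaves the two substantive steps unsupported. First, your Hahn--Banach argument only shows that $Q_f/\overline{\nabla_f Q_f}$ \emph{injects} into $(\belf)^*$; it does not identify its image with $T_{[0]}^*\teich(f)$. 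For that you must know which invariant Beltrami differentials are Teichm\"uller-trivial, i.e.\ compute $\ker D\pi(0)$ where $\pi:\Belf\to\teich(f)$ is the quotient map. The paper obtains $\ker D\pi(0)=\{\dbar\xi:\xi_{|\Lambda_f}=0\}$ only \emph{as a consequence} of the immersion (via $\ker D\pi(0)=\ker D\Psi(0)$ once $D\Psi_T(0)$ is known to be injective), so you cannot take the cotangent identification as an independent first step without redoing that computation; the paper's actual first step is Lemma \ref{lem:rangdpsi}, which computes $\ker D\Psi(0)$ by enlarging the marked set with repelling periodic points and using that a $\nabla_f$-invariant polar part at a repelling point must vanish — an argument absent from your proposal.

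Second, the immersion itself. Your reduction to the surjectivity statement $Q(X,B)+\overline{\nabla_f Q_f}=Q_f$ is a correct dual reformulation at the basepoint (both spaces are finite-dimensional), but the proposed mechanism — iterated pushforwards and telescoping — does not work as described. Writing $q=\nabla_f\bigl(\sum_{k=0}^n f_*^k q\bigr)+f_*^{n+1}q$, the remainder $f_*^{n+1}q$ for $q$ supported in an attracting or parabolic basin accumulates poles on the attracting (or parabolic) cycle, which lies in $\Lambda_f$ but need not lie in $B$; and for $q$ dual to an invariant line field on $\julia(f)$ the pushforwards do not converge at all. What is actually needed is the grand-orbit-quotient analysis of each type of periodic Fatou component (and the treatment of invariant line fields), which is the content of Theorem 4.5 of \cite{astorg2014dynamical}; the paper runs this on the tangent side to show $D\Psi$ has \emph{constant rank} on all of $\Belf$ and then applies the constant rank theorem, which also yields the immersion at every point of $\teich(f)$ rather than only near the basepoint. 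So your route is a legitimate dual reformulation, but as written the two analytic pillars — the computation of the trivial directions and the surjectivity of the codifferential — are gestured at rather than proved, and the one concrete mechanism you propose for the second would fail.
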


In the statement of the above theorem, the notation $ \overline{\nabla_f Q_f}$
refers to the $L^1$-closure of the complex vector space ${\nabla_f Q_f}$.
The proof of the particular case where $f$ is a rational map, available in \cite{astorg2014dynamical},
can easily be adapted to the general case of a finite type map. For the convenience of the reader, 
we will include here a sketch of the proof. Note that just like in \cite{astorg2014dynamical},
this approach may also lead to another construction of the complex structure on $\teich(f)$.

\begin{proof}[Proof of Theorem \ref{th:immteichtodef}]
	Let $(A,B)$ be admissible for $f$, with $B \subset \Lambda_f$. The only difference between the proof of 
	Theorem \ref{th:immteichtodef} and that of the main theorem of \cite{astorg2014dynamical} 
	is that we are going to replace the moduli space $\modspace$ of degree $d$ rational maps 
	with the deformation space $\defab(f)$.

	The natural inclusion $\Belf \to \mathrm{Bel}(X)$ descends to a holomorphic 
	map $\Psi: \Belf \to \defab(f)$. 
	
	\begin{lem}\label{lem:rangdpsi}
		The kernel of $D\Psi(0)$ is equal to 
		$$\ker D\Psi(0)=\{\dbar \xi, \xi_{|\Lambda_f}=0\}.$$
	\end{lem}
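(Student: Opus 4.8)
The plan is to factor $\Psi$ through the Teichmüller space $\teich(X,B)$ and reduce the statement to a question about the potentials of $f$-invariant Beltrami differentials. Since $\Psi$ is induced by the inclusion $\Belf\hookrightarrow\mathrm{Bel}(X)$ followed by the projection $\pi\colon\mathrm{Bel}(X)\to\teich(X,B)$ (its image landing in $\defab(f)$ precisely because invariance forces $\varpi=\sigma_f$), and since $\defab(f)$ is a submanifold of $\teich(X,B)$, the inclusion of tangent spaces is injective and one has $\ker D\Psi(0)=\belf\cap\ker D\pi(0)$. I would then invoke the infinitesimal form of Teichmüller's lemma: the kernel of $D\pi(0)\colon\mathrm{bel}(X)\to T_0\teich(X,B)$ is the annihilator of the cotangent space $Q(X,B)$ for the pairing $\langle\mu,q\rangle=\int_X\mu q$, and it consists exactly of the differentials $\dbar\xi$ with $\xi$ a continuous vector field vanishing on $B$. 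This reduces the lemma to identifying $\belf\cap\{\dbar\xi:\xi|_B=0\}$ with $\{\dbar\xi:\xi|_{\Lambda_f}=0\}$.

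\emph{Easy inclusion.} For $\mu=\dbar\xi\in\belf$ with $\xi|_{\Lambda_f}=0$, note that we are in the situation $B\subset\Lambda_f$, so $\xi|_B=0$ and hence $\mu\in\ker D\pi(0)\cap\belf=\ker D\Psi(0)$. Dually, every $q\in Q(X,B)$ lies in $Q_f$ (it is holomorphic on $\Omega_f$, its simple poles being in $B\subset\Lambda_f$), and integration by parts of $\dbar\xi$ against such a $q$, the boundary terms at the simple poles being controlled by $\xi$ vanishing there, gives $\langle\dbar\xi,q\rangle=0$. I would record at this stage that a potential is determined only up to a holomorphic vector field on $X$, and that the admissibility bounds on $\card A$ make this space of holomorphic fields small; this freedom is exactly what I want to exploit below.

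\emph{Hard inclusion: the grand orbit.} Given $\mu\in\belf$ in the kernel, the heart of the matter is to produce a potential vanishing on all of $\Lambda_f$. Here I would exploit the invariance $f^*\mu=\mu$: since $\dbar(f^*\xi)=f^*\dbar\xi=\mu$ on $W$, the field $f^*\xi-\xi$ is holomorphic on $W\setminus\critf$ and defines a genuine infinitesimal deformation $\dot f=Df\cdot(f^*\xi-\xi)$ of $f$. The triviality of $\mu$ in $\teich(X,B)$ should translate into the triviality of $\dot f$, which permits correcting $\xi$ by a holomorphic vector field so as to make it $f$-invariant, i.e. $f^*\xi=\xi$ on $W$. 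An invariant field satisfies $\xi\circ f=Df\cdot\xi$, so it vanishes automatically at every singular value in $S(f)$ (where the local covering structure degenerates), its zero set is grand-orbit invariant, and by continuity $\xi$ then vanishes on the closure of the grand orbit of $S(f)$.

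\emph{Hard inclusion: the Julia set, and the main obstacle.} It remains to show that the invariant potential $\xi$ vanishes on $\julia(f)$, and this is the step I expect to be genuinely global and the main obstacle. The invariance relation alone is insufficient — the flexible Lattès examples carry an invariant line field whose potential does not vanish on the Julia set — so non-exceptionality must enter. The mechanism is that a nonzero $f$-invariant Beltrami differential supported on a positive-measure subset of $\julia(f)$ is exactly an invariant line field, which a non-exceptional map does not admit; hence $\mu=0$ almost everywhere on $\julia(f)$. Turning this measurable statement into the vanishing of an honest potential on all of $\julia(f)$ (rather than merely on a fundamental domain of the Fatou part of $\Omega_f$) is the delicate point: one must use the triviality of $\mu$ together with the invariance to exclude any nonzero invariant conformal field along $\julia(f)$. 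Combining this with the vanishing on the closed grand orbit of $S(f)$ yields $\xi|_{\Lambda_f}=0$, completing the reverse inclusion; the positive-measure analysis on $\julia(f)$ is the part requiring the most care.
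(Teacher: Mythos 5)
Your setup (factoring $\Psi$ through the projection $\mathrm{bel}(X)\to T_{[0]}\teich(X,B)$, identifying $\ker D\Psi(0)$ with $\belf\cap\{\dbar\xi:\xi|_B=0\}$, and the easy inclusion using $B\subset\Lambda_f$) matches the paper. But the hard inclusion has two genuine gaps. First, the step ``the triviality of $\dot f$ permits correcting $\xi$ by a holomorphic vector field so as to make it $f$-invariant'' is not an argument: for genus $\geq 2$ there are no nonzero holomorphic vector fields at all, so no correction is available and you would need $\eta=f^*\xi-\xi=0$ outright; even in genus $0$ or $1$, solving $f^*\theta-\theta=\eta$ with $\theta$ globally holomorphic is exactly the nontrivial content, and you give no mechanism for it. The paper's mechanism is entirely different: adjoin to $(A,B)$ an arbitrary finite union $Z$ of repelling cycles, show via Epstein's polar-part argument at repelling points (plus the dimension count $\dim Q(B)/\nabla_f Q(A)=\card(B-A)$, which is where non-exceptionality enters, through infinitesimal Thurston rigidity) that the forgetful map $\mathrm{Def}_{A\cup Z}^{B\cup Z}(f)\to\defab(f)$ has injective differential, hence that $\ker D\Psi(0)$ is unchanged when $Z$ is added. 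This produces, for each $Z$, a potential $\xi_Z$ of $\mu$ vanishing on $Z$; Riemann--Roch forces $\xi_{Z_1}=\xi_{Z_2}$, so a single potential $\xi$ vanishes at \emph{every} repelling periodic point. Then $\eta=f^*\xi-\xi$ is meromorphic on $W$ and vanishes on a set with accumulation points, hence $\eta\equiv 0$.

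Second, your treatment of the Julia set is not salvageable as stated: you invoke ``a non-exceptional map does not admit an invariant line field,'' but that is precisely the open conjecture recalled in the introduction, not an available tool (this paper's Theorem A proves special cases of it, so assuming it here would be circular). No such input is needed: once $\xi$ is known to vanish at all repelling periodic points, their density in $\julia(f)$ and continuity of $\xi$ give $\xi|_{\julia(f)}=0$ directly, and the invariance $f^*\xi=\xi$ forces vanishing along the closed grand orbit of $S(f)$, yielding $\xi|_{\Lambda_f}=0$ without any measurable analysis of $\mu$ on the Julia set.
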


	\begin{proof}[Proof of Lemma \ref{lem:rangdpsi}]
	The differential of this map at the basepoint
	is the restriction to $\mathrm{bel}(f)$ of the quotient map 
	$$\mathrm{bel}(X) \to T_{[0]} \teich(X,B)$$ 
	whose kernel is $\{\dbar \xi, \xi \,\text{quasiconformal vector field on }X \,\text{s.t }  \xi_{|B}=0 \}$
	(see for example \cite{hubbard2006teichmuller} for a description of $T_{[0]} \teich(X,B)$).
	Since $B \subset \Lambda_f$, if $\mu=\dbar \xi$ with $\xi_{|\Lambda_f}=0$, then 
	$\mu \in \ker D\Psi(0)$.
	Conversely, if for $\dbar \xi \in \mathrm{bel}(f)$ we let $\eta=f^*\xi-\xi$ on $W$,
	 then $\eta$ is a meromorphic vector field on $W$
	(with poles at the critical points of $f$). Indeed, we have $\dbar \xi = f^*\dbar \xi$ so 
	outside of the critical points of $f$, $\dbar \xi = \dbar(f^*\xi)$ so that $\dbar \eta=0$ 
	outside of the critical points of $f$.
	
	Now we claim that if $\mu \in \ker D\Psi(0)$, then $\mu$ can be written as $\dbar \xi$,
	where $\xi$ is a quasiconformal vector field invariant under pullback by $f$ on $W$. Indeed, let $Z$ be 
	any union of repelling periodic points of $f$ disjoint from $B$, of cardinal at least $3$.
	Let $A'=A \cup Z$, and $B'=B \cup Z$; then $(A',B')$ is admissible for $f$. 
	The forgetful map $\teich(X,B') \to \teich(X,B)$ induces a natural holomorphic map
	$\Phi: \mathrm{Def}_{A'}^{B'}(f) \to \defab(f)$; its codifferential at the basepoint is the 
	natural map 
	$$D\Phi(0)^*: Q(B)/\nabla_f Q(A) \to Q(B')/\nabla_f Q(A')$$
	induced by the inclusion $Q(B) \to Q(B')$. This map is injective: indeed, if $q \in Q(B)$
	is of the form $q=\nabla_f  \phi$ with $\phi \in Q(A')$, then $\nabla_f \phi$ does not have 
	any poles at any repelling points in $Z$. So this means that the polar part of $\phi$ on $Z$
	is invariant, but since $Z$ is a union of repelling points this implies that $\phi$ does not 
	have any pole on $Z$ (see \cite{epstein2009transversality}). So in fact $q \in \nabla_f Q(A)$,
	which means that $D\Phi(0)^*$ is injective. Since 
	$$\dim Q(B)/\nabla_f Q(A) = \dim Q(B')/\nabla_f Q(A')=\card (B-A),$$
	$D\Phi(0)$ is also surjective, which implies that $D\Phi(0)$ is injective.
	Now notice that if we let $\Psi'$ be the natural map 
	from $\Belf$ to $\mathrm{Def}_{A'}^{B'}(f)$, then $\Psi = \Phi \circ \Psi'$, so that 
	$D\Psi(0) = D\Phi([0]) \circ D\Psi'(0)$. Since 
	$D\Phi(0)$ is injective, we have therefore $\ker D\Psi(0) = \ker D\Psi'(0)$. Since this is true for any
	choice of $Z$, this means that for any finite union of repelling point $Z$, there is a quasiconformal
	vector field $\xi_Z$ such that $\dbar \xi_Z  =\mu$ and $\xi_Z=0$ on $Z$. For every choice of $Z_1,Z_2$ 
	of cardinal at least $3$, the difference $\xi_{Z_1}-\xi_{Z_2}$ is a holomorphic vector field on $X$ 
	vanishing on at least $3$ points, so by the Riemann-Roch formula $\xi_{Z_1}=\xi_{Z_2}$.
	This means that every $\mu \in \ker D\Psi(0)$ may be written as $\mu=\dbar \xi$ with $\xi$ vanishing
	on \emph{every} repelling periodic point, so $\eta=f^*\xi-\xi$ also vanishes on every repelling 
	periodic point of $f$. Since $\eta$ is meromorphic on $W$, me must have $\eta=0$ by the isolated
	zeroes theorem. So $\xi=f^*\xi$ on $W$, which implies that $\xi_{|\Lambda_f}=0$.
	\end{proof}

	Once we have this description of the kernel of $D\Psi(0)$,
	and using the classification of non-escaping Fatou components
	for finite type maps, the proof of [\cite{astorg2014dynamical}, Th. 4.5]
	carries over verbatim, proving that the differential of $\Psi$ has constant rank on $\Belf$. 
	The map $\Psi: \Belf \to \defab(f)$ descends to the  map $\Psi_T: \teich(f) \to \defab(f)$.
	Applying the constant rank theorem to $\Psi$, we obtain local coordinates on $\Belf$ and $\defab(f)$,
	in which (following Section 5 of \cite{astorg2014dynamical})  the map $\Psi_T$ can be written locally
	as a linear inclusion between two finite dimensional complex spaces. Therefore, the map $\Psi_T$ is 
	an immersion. Let $\pi: \Belf \to \teich(f)$ be the quotient map; since 
	$D\Psi_T ([0]) \circ D\pi(0)=D\Psi(0)$, and since $D\Psi_T(0)$ is injective, we must have 
	$\ker D\pi(0) = \ker D\Psi(0)$. Therefore, using the preceding lemma, we have the following 
	canonical identification:
	$$T_{[0]} \teich(f) = \belf/\ker D\pi(0) = \belf/\{\dbar \xi, \xi_{|\Lambda_f}=0\},$$
	and by duality:
	$$T_{[0]}^* \teich(f) = Q_f/\overline{\nabla_f Q_f}.$$
	
	Finally, it just remains to prove that the kernel of $D\Psi_T(0)^*$ is 
	 $\left( Q(X,B) \cap \overline{\nabla_f Q_f} \right)/\nabla_f Q(X,A)$.
	But since the differential $D\Psi_T(0)$ is the natural map 
	$$\belf/\{\dbar \xi, \xi_{|\Lambda_f}=0\} \to \mathrm{bel}(X)/\{\dbar \xi, \xi_{|B}=0\},$$
	the codifferential $D\Psi_T(0)^*$ is the natural map
	$$Q(X,B)/\nabla_f Q(X,A) \to Q_f/\overline{\nabla_f Q_f},$$
	whose kernel is clearly $\left( Q(X,B) \cap \overline{\nabla_f Q_f} \right)/\nabla_f Q(X,A)$.
\end{proof}

%
%

\section{Action of quadratic differentials on vector fields}\label{sec:ddf}

In this section, $\s$ will denote a compact Riemann surface  of genus $g$.
If we chose an arbitrary Hermitian metric on $\s$, we get a topology on the space
$\Gamma(T\s)$ of continuous vector fields on $\s$, induced by the norm
$$\|\xi\|_{\infty}= \sup_{s \in \s} \|\xi(s)\|.$$
This norm depends on the particular choice of the Hermitian metric, but not the topology it induces
(by compacity of $\s$). We will refer to it as the \emph{uniform topology} for continuous vector fields
on $\s$.

\begin{defi}
	Denote by $\ddf$ the (topological) dual of the topological vector space of continuous vector fields
	on $\s$, equipped the topology dual to the uniform topology.
\end{defi}

Again, the choice of a Hermitian metric on $X$ gives by duality a norm generating the topology
on $\ddf$, but that topology is independant from the choice of the norm. Depending on the genus $g$ of $X$,
it will be convenient to use different choices of metrics in the following proofs.

\begin{defi}\label{def:diffquadreg}
	Let $q$ be an integrable quadratic differential on $\s$. Then $q$ induces a linear form on the space
	of smooth vector fields in the following way:
	$$\xi \mapsto \int_{\s} q \cdot \dbar \xi.$$
	If that linear form extends continuously to an element of $\ddf$, we denote
	that extension by $\dbar q$ and we say that $q$ is \emph{regular}.
\end{defi}

%

Note that if  $q$ is written in local coordinates as $q=h(z)dz^2$, then $q$ is 
regular if and only if $\frac{\partial h}{\partial \overline{z}}$ (in the sense of distributions) 
is a complex Radon measure. It is in particular the case when $q$ is meromorphic 
with at worst simple poles, in which case $\dbar q$ has finite support.

An immediate consequence of Weyl's lemma is that if $q$ is a regular 
quadratic differential such that $\dbar q$ is supported in a compact $K$,
then $q$ is holomorphic outside of $K$.

\begin{prop}\label{prop:struct}
	Let $\mcal$ be the space of Radon measures on $\s$, and $A = C^0(\s, \C)$.
	Let $\Omega^{1,0}(X)$ denote the space of complex-valued continuous 
	forms of bidegree $(1,0)$ on $X$.
	The map
	\begin{align*}
	\mcal \otimes_A \Omega^{1,0}(\s) &\rightarrow \Gamma(T\s)^* \\
	\mu \otimes \alpha &\mapsto \left(\xi \mapsto \int_{\s} \alpha(\xi) d\mu \right)     
	\end{align*}
	is an isomorphism of $A$-modules (and therefore of $\C$-vector spaces).
\end{prop}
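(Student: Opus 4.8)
The plan is to exploit the fact that the holomorphic tangent bundle $T\s$ is a complex line bundle, which lets me manufacture a finite ``dual basis'' for the $A$-modules $\Gamma(T\s)$ and $\Omega^{1,0}(\s)$ and thereby reduce the whole statement to the Riesz--Markov representation theorem. First I would fix a finite cover $\s = \bigcup_{i=1}^N U_i$ by coordinate charts, with $z_i$ a coordinate on $U_i$, so that $e_i = \partial/\partial z_i$ is a continuous frame of $T\s$ over $U_i$ and $e_i^* = dz_i$ the dual coframe of $\Omega^{1,0}$. Choosing a partition of unity $\{\rho_i\}$ subordinate to $\{U_i\}$ together with cutoffs $\chi_i \Subset U_i$ satisfying $\chi_i \equiv 1$ on $\operatorname{supp}\rho_i$, I set $\sigma_i := \rho_i e_i \in \Gamma(T\s)$ and $\tau_i := \chi_i e_i^* \in \Omega^{1,0}(\s)$, both extended by zero and hence globally continuous. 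Since on each $U_i$ one has $e_i^*(\xi)\,e_i = \xi$ (a line bundle), a direct computation using $\rho_i \chi_i = \rho_i$ gives the dual-basis identity $\xi = \sum_i \tau_i(\xi)\,\sigma_i$ for every vector field $\xi$, and dually $\alpha = \sum_i \alpha(\sigma_i)\,\tau_i$ for every $\alpha \in \Omega^{1,0}(\s)$. Writing $E_{il} := \tau_i(\sigma_l) \in A$, these two identities show that the matrix $E \in M_N(A)$ is idempotent.

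Write $\Phi$ for the map in the statement. Granting that $\Phi$ is well defined (the integrand $\alpha(\xi)$ lies in $A$, and $A$-balancedness is the identity $\int (g\alpha)(\xi)\,d\mu = \int \alpha(\xi)\,d(g\mu)$) and $A$-linear, surjectivity is then immediate: given $L \in \ddf$, the functional $g \mapsto L(g\sigma_i)$ is continuous on $A = C^0(\s,\C)$, so by Riesz--Markov it equals $\int g\,d\nu_i$ for a unique complex Radon measure $\nu_i$; the dual-basis identity then yields $\Phi\!\left(\sum_i \nu_i \otimes \tau_i\right)(\xi) = \sum_i L(\tau_i(\xi)\sigma_i) = L(\xi)$.

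The hard part is injectivity, because the tensor product over $A$ carries nontrivial relations. Here I would use the dual identity $\alpha = \sum_i \alpha(\sigma_i)\tau_i$ to put an arbitrary element of $\mcal \otimes_A \Omega^{1,0}(\s)$ into the normal form $\sum_i \lambda_i \otimes \tau_i$, and then identify $\Omega^{1,0}(\s)$ with the image $A^N E$ of the idempotent $E$ (the projectivity of $\Omega^{1,0}(\s)$ as an $A$-module), which gives the key description $\mcal \otimes_A \Omega^{1,0}(\s) \cong \mcal^N E$ together with the criterion that $\sum_i \lambda_i \otimes \tau_i = 0$ if and only if $\vec\lambda E = 0$. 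If $\Phi\!\left(\sum_i \lambda_i \otimes \tau_i\right) = 0$, testing against $\xi = g\sigma_l$ for arbitrary $g \in A$ gives $\int g\, d\!\left(\sum_i E_{il}\lambda_i\right) = 0$, whence the uniqueness clause of Riesz--Markov forces $\sum_i E_{il}\lambda_i = 0$ as a measure for each $l$, i.e.\ $\vec\lambda E = 0$; the criterion then shows the element is zero.

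I expect the genuine obstacle to be justifying the identification $\mcal \otimes_A \Omega^{1,0}(\s) \cong \mcal^N E$ and the associated vanishing criterion, that is, controlling the relations in the tensor product; this is precisely where the line bundle structure of $T\s$ (equivalently, finite generation and projectivity of $\Omega^{1,0}(\s)$ over $A$, encoded in the idempotent $E$) is essential. Everything else reduces to the existence and uniqueness clauses of the Riesz--Markov theorem, applied chart by chart through the frames $e_i$.
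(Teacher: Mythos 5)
Your proof is correct. For surjectivity it is essentially the paper's argument in more explicit form: the paper localizes a functional $u\in\ddf$ to a coordinate chart, applies Riesz's representation theorem there to write $u=dz\otimes_A\mu$, and glues with a partition of unity; your frames $\sigma_i=\rho_i e_i$ and the functionals $g\mapsto L(g\sigma_i)$ are exactly that localization made concrete. Where you genuinely add something is injectivity: the paper dismisses it with ``the considered map is clearly an injective morphism of $A$-modules,'' whereas injectivity is precisely the point at which one must control the relations in $\mcal\otimes_A\Omega^{1,0}(\s)$ --- a bare element $\sum_j\mu_j\otimes\alpha_j$ mapping to zero does not obviously vanish in the tensor product. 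Your normal form $\sum_i\lambda_i\otimes\tau_i$, the idempotent $E_{il}=\tau_i(\sigma_l)$, and the identification $\mcal\otimes_A\Omega^{1,0}(\s)\cong\mcal^N E$ (projectivity of $\Omega^{1,0}(\s)$ as an $A$-module, coming from $T\s$ being a line bundle) give the clean vanishing criterion $\vec\lambda E=0$, which you then verify by testing against $g\sigma_l$ and invoking the uniqueness clause of Riesz--Markov. All the steps check out (the dual-basis identities follow from $\rho_i\chi_i=\rho_i$, and $E^2=E$ follows by evaluating $\xi=\sum_i\tau_i(\xi)\sigma_i$ on $\xi=\sigma_l$ against $\tau_j$). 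So your write-up is a complete and more self-contained version of the paper's proof; the trade-off is length against the paper's reliance on the reader accepting injectivity as evident.
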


\begin{rem}
	Since $\Omega^{1,0}(\s)$ is an $A$-module of rank $1$,
	every element of $\Omega^{1,0}(\s) \otimes_A \mcal$ can be written as
	$\alpha \otimes_A \mu$, where $\alpha \in \Omega^{1,0}(\s)$ and $\mu \in \mcal$.
\end{rem}

\begin{proof}
	The considered map is clearly an injective morphism of $A$-modules.
	It is therefore enough to prove that it is surjective. Let $u \in \ddf$.
	If the support of $u$ in included in a local coordinate domain $(U,z)$,
	then it is a consequence of Riesz's representation theorem that $u$ can be
	written as $u=dz \otimes_A \mu$, where $\mu$ is a Radon measure 
	of support included in $U$. We conclude easily using a partition of unity.
\end{proof}

We will therefore identify from now on $\ddf$ with $\Omega^{1,0}(\rs) \otimes_A \mcal$.

\begin{defi}
	Let $f: W \to \s$ be an open holomorphic map and let $u = \alpha \otimes \mu \in \ddf$ be such that
	$\frac{\| \alpha \|}{\|Df\|} \in L^1(|\mu|)$ (for any continuous Hermitian metric on $\s$).
	We define the pushforward of $u$, denoted by $f_*u$, by :
	$$\langle f_*u, \xi \rangle := \langle u, f^* \xi \rangle = \int_{\rs} \alpha(f^*\xi)d\mu.$$
\end{defi}

Note that $f_*u \in \ddf$.
In particular, if $u \in \ddf$ has support $K$ that does not meet
 $S(f)$, then $f_* u$ is well-defined and has a support included in
$f(K)$.

\begin{lem}\label{lem:resolutiondbar}
	Let $Z \subset \s$ be a subset of cardinal $|3g-3|$. Let $u \in \ddf$ be supported in
	$\{y\}$, where $y \in X$.
	Then there is a unique meromorphic quadratic differential $q$ on $\s$ with at worst simple poles such that:
	\begin{itemize}
		\item if $g=0$, then 
		 $\dbar q - u$ is supported in $Z$ 
		\item if $g\geq 1$,  $\dbar q = u$ and for all $z \in Z$, $q(z)=0$.
	\end{itemize}
	Moreover, for any choice of Hermitian metric on $\s$, there is a constant $C>0$ depending only on that 
	metric and on $Z$ such that 
	$\|q\|_{L^1} \leq C \|u\|_{\infty}.$
\end{lem}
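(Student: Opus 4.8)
The plan is to reduce the assertion to a finite-dimensional linear-algebra problem governed by the Riemann--Roch theorem, and then to extract the $L^1$ bound from finite-dimensionality together with a compactness argument in the position of $y$.

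First I would make $u$ explicit. By Proposition \ref{prop:struct} we may write $u=\alpha\otimes\mu$, and since $u$ is supported in $\{y\}$ the measure $\mu$ must be a point mass $c\,\delta_y$; hence $u$ is the evaluation functional $\xi\mapsto c\,\alpha(y)\bigl(\xi(y)\bigr)$ attached to a single covector $r\in T_y^*\s$, and for any fixed Hermitian metric one has the exact equality $\|u\|_\infty=|r|$ (the length of $r$), since a continuous vector field may realize any prescribed value at $y$ of norm at most one. On the quadratic-differential side I would recall, from the computation following Weyl's lemma, that for a meromorphic $q$ with at worst simple poles $\dbar q$ is supported on the poles and, at a simple pole, is a fixed nonzero universal multiple of the evaluation functional determined by the \emph{residue} of $q$, a well-defined covector in the cotangent space at that pole. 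Thus prescribing $\dbar q$ at $y$ to agree with $u$ is \emph{exactly} the linear condition that the residue of $q$ at $y$ be a prescribed multiple of $r$, and the genus dichotomy in the statement reflects whether one places the compensating data at the auxiliary points $Z$ as extra poles or as imposed zeros.

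I would then produce $q$ by inverting a single linear map. For $g=0$ I look for $q$ with poles contained in $\{y\}\cup Z$: such differentials are the global sections of $K^{2}$ twisted by this degree-$0$ divisor, a one-dimensional space by Riemann--Roch, and the residue-at-$y$ map into $T_y^*\s$ is injective, because a nonzero $q$ with poles only along $Z$ would be a quadratic differential of negative degree on $\rs$. Hence this map is an isomorphism, the unique preimage of $r$ is the desired $q$, and $\dbar q-u$ is then supported in $Z$; uniqueness follows by the same degree count applied to the difference of two solutions (which is holomorphic off $Z$ by Weyl's lemma, hence zero). For $g\ge2$ I instead take the $(3g-2)$-dimensional space of differentials with at worst a simple pole at $y$ and impose the residue at $y$ together with vanishing at the $3g-3$ points of $Z$; the resulting linear map lands in $T_y^*\s\oplus\bigoplus_{z\in Z}K^2_z$, a space of the same dimension $3g-2$, and its kernel is $H^0(K^{2}(-Z))$, which vanishes exactly when $Z$ imposes independent conditions on holomorphic quadratic differentials (true for $Z$ in general position). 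Injectivity then forces bijectivity, giving existence and uniqueness at once. (The intermediate case $g=1$, where $Z=\emptyset$, is the degenerate one and must be read with the one-dimensional space of holomorphic quadratic differentials in mind.)

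Finally, for the estimate I would note that, with $y$ and $Z$ fixed, the assignment $r\mapsto q$ is the inverse of the isomorphism just constructed and is therefore linear, whence $\|q\|_{L^1}\le C(y)\,|r|=C(y)\,\|u\|_\infty$ for some finite $C(y)$. To obtain a constant independent of $y$ I would regard $(y,r)\mapsto q$ as a map and take the supremum of $\|q\|_{L^1}$ over the unit covectors, using compactness of $\s$. \emph{The main obstacle is precisely the continuity and boundedness of this solution operator as $y$ approaches a point of $Z$}, where two poles of $q$ (for $g=0$), or a pole and a forced zero (for $g\ge2$), collide. For $g=0$ this is benign: the prescribed residue at $y$ stays fixed while the residues created along $Z$ tend to $0$, so the merging poles form a dipole whose $L^1$ mass is $O(\varepsilon\log(1/\varepsilon))$ in the separation $\varepsilon$, and in fact $\|q\|_{L^1}\to0$ as $y\to Z$; the local computation near the colliding poles is the only genuinely analytic point. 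For higher genus the same scheme applies once $y$ is kept in a compact set on which the solution operator stays bounded, the rest being Riemann--Roch bookkeeping.
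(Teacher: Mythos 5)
Your existence--uniqueness argument is essentially the one in the paper: in genus $0$ the paper simply writes the one-dimensional space explicitly as $q(z)=\alpha\,\frac{y(y-1)}{z(z-1)(z-y)}\,dz^2$ in coordinates with $Z=\{0,1,\infty\}$, which is your Riemann--Roch count made concrete, and in genus $\geq 1$ the paper makes the same dimension count and pins down $q$ by its polar part at $y$ via Stokes' formula $\langle \dbar q,\xi\rangle = 2i\pi\,\res(q\cdot\xi,y)$. (Both you and the paper implicitly need $Z$ to impose independent conditions on quadratic differentials for $g\geq 2$; you at least flag this.)

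The genuine gap is in the norm estimate, which is exactly where you and the paper part ways. You propose to get uniformity in $y$ by compactness of $\s$, which requires the solution operator $r\mapsto q$ to remain bounded as $y$ approaches (or lands on) a point of $Z$. You correctly identify this collision as the main obstacle and carry out the local computation only for $g=0$; for $g\geq 2$ you assert that ``the same scheme applies once $y$ is kept in a compact set on which the solution operator stays bounded,'' but such boundedness near $Z$ is precisely the claim to be proved, and your finite-dimensional setup actually degenerates at $y\in Z$, where ``simple pole at $y$'' and ``$q(y)=0$'' are incompatible conditions, so the family of linear problems you invert is not even obviously continuous there. The genus $1$ case is likewise waved at rather than proved. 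The paper avoids all of this: for $g\geq 2$ it argues by duality, pairing $q$ against $\mu=\dbar\xi$ and invoking the uniform bound $\|\xi\|_\infty\leq 4\,\|\dbar\xi\|_\infty$ for quasiconformal vector fields on a hyperbolic surface, which yields $\|q\|_{L^1}=\sup_{\|\mu\|_\infty\leq 1}\bigl|\int_\s q\cdot\dbar\xi\bigr|=\sup\bigl|\langle u,\xi\rangle\bigr|\leq 4\|u\|$ with a constant manifestly independent of $y$ and with no collision analysis; for $g=1$ it reduces to $y=[0]$ by the translation automorphisms of the torus and then uses linearity in finite dimensions. To repair your argument you would either need to prove upper semicontinuity of $y\mapsto\|q_y\|_{L^1}$ up to and including $y\in Z$ (a genuinely delicate degeneration of the linear system for $g\geq 2$), or, more simply, replace the compactness step by the paper's duality bound.
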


\begin{proof}
	We will treat separately the three following cases: $g=0$, $g=1$ and $g \geq 2$.
	
	\subsubsection*{The case of genus $0$}
	\medskip
	If $X$ has genus $0$, then $X$ is isomorphic to the Riemann sphere $\rs$. Note that 
	a meromorphic quadratic differential $q$ with at worst simple poles on $X$ will satisfy
	the property that $\dbar q - u$ if and only if $q$ has at worst four simple poles, located in 
	$Z \cup \{y\}$, and for all smooth vector fields $\xi$ vanishing on $Z$, 
	$$\int_X q \cdot \dbar \xi = \langle u, \xi \rangle.$$

	If we work in affine coordinates in which $Z=\{0,1,\infty\}$, then $q$ has the form:
	$$q(z) = \alpha \frac{y(y-1)}{z(z-1)(z-y)} dz^2$$
	where $\alpha$ is such that $u=\delta_y \otimes (\alpha dz)$, $\delta_y$ being the Dirac mass
	at $y$. 
	Up to permuting the order of points in $Z$, we may assume that $|y|<1$. 
	Then it is easy to see that $\|q\| \leq C |\alpha|$ for some constant $C>0$ (depending on the 
	coordinates $z$ and therefore on $Z$)
	and that $\|u\| = |\alpha| \sup_{|y|<1} \|dz\|$.
	Therefore there is a constant $C_2>0$ depending only on the metric and on $Z$ such that
	$\|q\|_{L^1} \leq C_2 \|u\|$.	
	This concludes the case of genus $0$.
	\subsubsection*{The case of genus $1$}
	\medskip
	In this case, $Z$ is empty, so we need to prove that there is a unique quadratic differential
	with at worst simple poles such that $\dbar q=u$. Any such quadratic differential must have 
	at worst one simple pole, located at $y$. According to the Riemann-Roch formula, such quadratic 
	differentials form a complex vector space of dimension one. Moreover, Stokes' theorem 
	implies that for all smooth vector fields $\xi$ on $X$,
	$$\langle \dbar q, \xi \rangle = \int_X q \cdot \dbar \xi = 2i\pi \res(q \cdot \xi, y).$$
	Therefore, there is exactly one choice of polar part at $y$ (and therefore exactly one choice of $q$)
	such that $\dbar q = u$. Let us now prove the inequality.
	If $g=1$, then $X$ is a complex torus $\C/\Lambda$, and for any $y \in X$, there is a translation 
	descending to an automorphism $T_y$ of $X$ mapping the basepoint $[0]$ to $y$. 
	The pullback 
	map $T_y^*$ induces an isometry for the $L^1$ norm of integrable quadratic differentials, as well as
	for linear forms in $\ddf$ (endowed with the norm induced by the flat Hermitian metric on $X$). 
	In other words, we lose no generality in assuming that $y=[0]$. Then the desired inequality is trivial,
	since the map $u \mapsto q$ is a complex linear map between finite dimensional normed vector spaces 
	(in fact one-dimensional), therefore is continuous.
	
	\subsubsection*{The case of genus at least $2$}
	\medskip
	The existence and unicity is similar to the previous case: notice that if $\dbar q=u$ and $q$ has
	at worst simple poles, then $q$ must have at worst a simple pole at $y$ and must vanish on $Z$. 
	According to the Riemann-Roch formula, such quadratic differentials form a vector space of complex
	dimension one, and once again, the choice of the right polar part at $y$  uniquely determines $q$.
	
	Now let us prove the desired inequality. Since $g \geq 2$, $X$ is hyperbolic, so we may pick 
	its hyperbolic metric as a choice of Hermitian metric inducing a norm on $\ddf$.
	We will work by duality. According to Theorem A in \cite{astorg2014dynamical}, for any quasiconformal vector field $\xi$ on $X$, we have
	$\|\xi\| \leq 4 \|\dbar \xi \|_{\infty}$ (here $\|\xi\|$ is the supremum of the length of $\xi$
	is the hyperbolic metric on $X$, which is finite since $X$ is compact).
	Therefore $\|q \|_{L^1} \leq 4 \|u\|$.
\end{proof}

\begin{theo}\label{th:resolutiondbar}
	Let $Z \subset \s$ be a subset of cardinal $|3g-3|$. Let $u \in \ddf$.
	Then there is a unique regular quadratic differential $q$ on $\s$ such that:
	\begin{itemize}
		\item if $g=0$, then 
		$\dbar q - u$ is supported in $Z$ 
		\item if $g\geq 1$,  $\dbar q = u$ and for all $z \in Z$, $q(z)=0$.
	\end{itemize}
	Moreover, for any choice of Hermitian metric on $\s$, there is a constant $C>0$ depending only on that 
	metric such that 
	$\|q\|_{L^1} \leq C \|u\|_{\infty}.$
\end{theo}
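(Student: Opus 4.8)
The plan is to deduce the general case from the point-mass case of Lemma \ref{lem:resolutiondbar} by a superposition (integration) argument, with the uniform $L^1$-bound of that lemma supplying all the convergence. First I would use Proposition \ref{prop:struct} to write $u = \alpha \otimes \mu$ with $\alpha \in \Omega^{1,0}(\s)$ and $\mu \in \mcal$; a direct computation of the dual norm shows that $\|u\|_\infty = \int_\s \|\alpha\|\, d|\mu|$. For each $w \in \s$ let $u_w := \alpha \otimes \delta_w$ be the associated point mass and let $q_w$ be the unique meromorphic quadratic differential furnished by Lemma \ref{lem:resolutiondbar} (for the given $Z$). By the uniqueness in that lemma, $q_w$ depends linearly on the covector $\alpha(w)$ and satisfies $\|q_w\|_{L^1} \leq C\,\|\alpha(w)\|$.

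The heart of the argument is then to set $q := \int_\s q_w\, d\mu(w)$, understood as a Bochner integral of $L^1$ quadratic differentials. To make sense of this I first need $w \mapsto q_w$ to be measurable, and for this I would establish that it is in fact continuous into $L^1$: in genus $0$ this is immediate from the explicit formula $q_w(z) = \alpha\,\frac{w(w-1)}{z(z-1)(z-w)}\,dz^2$, and in higher genus the same local model governs $q_w$ near $w$ while the global corrections imposing $\dbar q_w = u_w$ and $q_w|_Z = 0$ vary holomorphically, hence continuously, with $w$. Granting this, Minkowski's integral inequality together with the bound of Lemma \ref{lem:resolutiondbar} gives at once that the integral converges and that $\|q\|_{L^1} \leq \int_\s \|q_w\|_{L^1}\,d|\mu|(w) \leq C\int_\s \|\alpha\|\,d|\mu| = C\,\|u\|_\infty$, which is the desired estimate.

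It remains to verify the $\dbar$-equation. For any smooth vector field $\xi$, Fubini's theorem (again justified by the $L^1$-bound, which controls the singularity of $q_w$ along the diagonal) gives $\int_\s q\cdot\dbar\xi = \int_\s\big(\int_\s q_w\cdot\dbar\xi\big)\,d\mu(w)$, and Lemma \ref{lem:resolutiondbar} evaluates the inner integral as $\langle u_w,\xi\rangle$ when $g \geq 1$ (up to a term supported in $Z$ when $g=0$). Integrating in $w$ recovers $\langle u,\xi\rangle$ (up to a $Z$-supported term), so the linear form $\xi \mapsto \int_\s q\cdot\dbar\xi$ extends continuously to $\ddf$; thus $q$ is regular, with $\dbar q = u$ for $g\geq 1$ and $\dbar q - u$ supported in $Z$ for $g=0$. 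The normalization $q|_Z=0$ for $g\geq 1$ is inherited from the $q_w$: at a point of $Z$ the consequence of Weyl's lemma makes $q$ holomorphic, and the $L^1$-convergence of the Bochner integral then forces $q(z)=\int_\s q_w(z)\,d\mu(w)=0$.

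Finally, for uniqueness I would take the difference $q$ of two solutions: it is regular with $\dbar q$ supported in $Z$ (for $g=0$), or with $\dbar q = 0$ and $q|_Z=0$ (for $g\geq 1$). By the consequence of Weyl's lemma noted after Definition \ref{def:diffquadreg}, $q$ is holomorphic off $Z$, hence an integrable meromorphic quadratic differential with at worst simple poles on $Z$, and the conclusion reduces to the finite-dimensional uniqueness already established in the three cases of Lemma \ref{lem:resolutiondbar} — a degree count on $\rs$ for $g=0$, the isomorphism between holomorphic quadratic differentials and their values on $Z$ for $g\geq 2$, and the residue normalization for $g=1$ — forcing $q=0$. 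I expect the only genuinely delicate point to be the measurability/continuity of the kernel $w \mapsto q_w$ and the attendant use of Fubini near the diagonal; everything else is bookkeeping powered by the uniform bound of Lemma \ref{lem:resolutiondbar}.
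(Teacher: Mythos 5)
Your proposal follows essentially the same route as the paper's proof: decompose $u=\alpha\otimes\mu$ via Proposition \ref{prop:struct}, apply Lemma \ref{lem:resolutiondbar} to each point mass $\alpha\otimes\delta_w$, superpose by integrating against $\mu$, and verify the $\dbar$-equation and the $L^1$-bound by a Fubini argument. You are in fact somewhat more careful than the paper on the measurability of $w\mapsto q_w$, on extracting the stated bound $\|q\|_{L^1}\leq C\|u\|_{\infty}$ rather than merely $\|q\|_{L^1}\leq C$ with $C$ depending on $\alpha$, and on the uniqueness, which the paper leaves implicit.
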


We will say that the quadratic differential $q$ given by the above theorem is \emph{the}
$Z$-normalized quadratic differential corresponding to $u$.

\begin{proof}
	According to Proposition \ref{prop:struct}, we can write $u=\mu \otimes \alpha$. 
	For any $y \in X$, let $u_y = \delta_y \otimes \alpha$, where $\delta_y$ is the Dirac mass at $y$,
	and $q_y$ be the corresponding quadratic differential given by the preceding lemma. Let 
	$r_y=\dbar q_y - u_y$: by definition, $r_y$ is supported in $Z$.
	The second part of that lemma implies that there is a constant $C>0$ depending only on
	 the choice of Hermitian metric and on $\alpha$
	such that for all $y \in X$, $\|q_y \|_{L^1} \leq C$. Let $q= \int_X q_y \, \mu(y)$.
	Note that $q$ is integrable and $\|q \|_{L^1} \leq C$.
	We will prove that $q$ satisfies the desired property. Let $\xi$ be a smooth vector field on $X$. 
	We have:
	\begin{align*}
		\int_X q \cdot \dbar \xi &= \int_X \left( \int q_y \,\mu(y)\right) \cdot \dbar \xi  \\
								&= \int \left( \int_X q_y \cdot \dbar \xi\right) \mu(y)  \\
								&= \int \langle u_y, \dbar \xi \rangle \mu(y)\\
								&= \int \alpha_y(\xi) \, \mu(y) + \int \langle r_y, \xi \rangle \mu(y)\\
								&=  \langle u, \xi \rangle + \langle r, \xi \rangle							
 	\end{align*}
 	where $\langle r, \xi \rangle := \int  \langle r_y, \xi \rangle \mu(y)$ is supported in $Z$ if $g=0$,
 	and is identically zero otherwise.
 	Thus the theorem is proved.
\end{proof}

\begin{prop}\label{prop:supportfq}
	Let $q$ be a regular quadratic differential, and $f: W \to \s$ a finite type analytic map.
	Assume that $f_* \dbar q$ and $\dbar f_*q$ are well-defined as elements of $\ddf$.
	Then
	$$\mathrm{supp}\,(\dbar f_* q - f_* \dbar q) \subset S(f).$$ 
\end{prop}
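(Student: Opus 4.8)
The plan is to reduce the statement to a local computation away from the singular set $S(f)$, using the fact that the support of a distributional object can be checked by pairing against test vector fields supported in any open set disjoint from $S(f)$. Concretely, I would fix a point $y \notin S(f)$ and a smooth vector field $\xi$ compactly supported in a small neighborhood $U$ of $y$ with $\overline{U} \cap S(f) = \emptyset$, and show that $\langle \dbar f_* q - f_* \dbar q, \xi \rangle = 0$. Since every point outside $S(f)$ admits such a neighborhood, this yields $\mathrm{supp}(\dbar f_* q - f_* \dbar q) \subset S(f)$.

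First I would unwind both terms by definition. For the first term, $\langle \dbar f_* q, \xi \rangle = \int_{\s} (f_* q) \cdot \dbar \xi$ by Definition \ref{def:diffquadreg} (using that $f_* q$ is assumed regular, so $\dbar f_* q$ is a genuine element of $\ddf$). For the second term, $\langle f_* \dbar q, \xi \rangle = \langle \dbar q, f^* \xi \rangle = \int_{\s} q \cdot \dbar(f^*\xi)$, where I use the definition of pushforward of an element of $\ddf$ together with regularity of $q$. The core of the argument is then a change-of-variables identity: over the open set $U$ disjoint from $S(f)$, the map $f$ restricts to a covering, so $f^{-1}(U)$ splits into sheets on which local inverse branches $g_i$ are defined, and one has the pointwise relation $f_* q \cdot \dbar \xi = \sum_i (g_i^* q)\cdot \dbar \xi$, which upon pulling back sheet by sheet matches $q \cdot \dbar(f^*\xi)$ integrated over $f^{-1}(U)$. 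This is exactly the formal adjunction $\int (f_* q)\cdot \dbar\xi = \int q \cdot \dbar(f^* \xi)$, valid because $\dbar$ commutes with holomorphic pullback: on each sheet $\dbar(g_i^* \xi) = g_i^*(\dbar \xi)$ since $g_i$ is holomorphic, and similarly $f^* \dbar \xi = \dbar f^* \xi$ away from ramification, which here means away from $S(f)$.

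The main obstacle I anticipate is handling the integration-by-parts carefully enough over a possibly infinite-degree covering, and making sure no boundary contributions appear. Because $\xi$ is compactly supported in $U$ away from $S(f)$, the pulled-back field $f^* \xi$ is supported in $f^{-1}(U)$, which avoids both the critical points of $f$ and the preimages of the singular values, so all the local branch computations are on honest unramified sheets and the integrability hypotheses ($f_* \dbar q$ and $\dbar f_* q$ well-defined, together with regularity of $q$) guarantee that the interchange of summation and integration and the dominated-convergence passage are legitimate. The identity $\int_{\s} (f_* q)\cdot \dbar \xi = \int_{\s} q \cdot \dbar(f^* \xi)$ therefore holds for such $\xi$, giving $\langle \dbar f_* q - f_* \dbar q, \xi \rangle = 0$ and completing the proof.
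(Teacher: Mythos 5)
Your proposal is correct and follows essentially the same route as the paper: test $\dbar f_*q - f_*\dbar q$ against vector fields supported away from $S(f)$, use the adjunction $\int (f_*q)\cdot\dbar\xi = \int q\cdot\dbar(f^*\xi) = \langle \dbar q, f^*\xi\rangle$, and invoke the fact that $\dbar$ commutes with $f^*$ on the unramified part. The paper phrases this with a single quasiconformal vector field vanishing on a neighborhood of $S(f)$ rather than localizing at each point $y\notin S(f)$, but the computation is identical.
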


\begin{proof}
	Let $\xi$ be a quasiconformal vector field vanishing on a neighborhood of $S(f)$.
	Then $f^*\xi$ is also quasiconformal  (and it vanishes in the neighborhood of 
	$\critf$), and $\dbar f^*\xi = f^*\dbar \xi$.
	Therefore:
	$$\langle \dbar f_*q, \xi \rangle = \langle f_* \dbar q, \xi \rangle,$$
	and so $\langle f_* \dbar q - \dbar f_*q, \xi \rangle =0$.
	This exactly means that  $\mathrm{supp}\,(\dbar f_* q - f_* \dbar q) \subset S(f)$. 
\end{proof}

\section{Extended infinitesimal Thurston rigidity}\label{sec:injnabla}

\begin{defi}
	A compact set $K \subset \s$ is called a $C$-compact if it satisfies the following property: 
	 any continuous function
	 on $K$  can be uniformly approximated  by restrictions of 
	functions that are holomorphic  on  a neighborhood of $K$.
\end{defi}

Note that a $C$-compact must have empty interior.
The following proposition gives sufficient conditions
for a
compact to be a $C$-compact. The proof is adapted from
\cite{makienko2008remarks} to the case of a general Riemann surface.

\begin{rem}\label{rem:localization}
	In fact, it can be proved (see \cite[Th. 2]{boivin2004uniform}) that being a $C$-compact is a local property, 
	in the sense that $K$ is a $C$-compact if and only if for every point $p \in K$,
	there is a basis of neighborhoods $(U_n)_{n \in \N}$ such that $K \cap \overline{U_n}$
	is a $C$-compact. Therefore we can replace functions by vector fields
	(or sections of any holomorphic line bundle) without 
	changing the definition of $C$-compact.
\end{rem}

\begin{prop}\label{prop:ccompact}
	Let $K$ be a compact subset of $\s$. Each of the following properties imply that 
	$K$ is a $C$-compact :
	\begin{itemize}
		\item[$i)$] $K$ has zero Lebesgue measure, or
		\item[$ii)$] $K$ has empty interior and disconnects $\s$ into finitely many connected components.
	\end{itemize}
\end{prop}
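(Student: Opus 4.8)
The plan is to reduce both assertions to classical theorems on uniform rational approximation in the plane, via the localization principle of Remark \ref{rem:localization}. That remark turns the problem into a local one: it suffices to show that every point $p \in K$ admits arbitrarily small coordinate neighborhoods $U$ (each biholomorphic to a disk in $\C$) such that the local piece $L := K \cap \overline{U}$, regarded as a compact subset of $\C$, satisfies $R(L) = C(L)$, where $R(L)$ denotes the uniform closure in $C(L)$ of the restrictions of rational functions with poles off $L$. Since such rational functions are holomorphic on a neighborhood of $L$, the equality $R(L) = C(L)$ forces every continuous function on $L$ to be uniformly approximable by functions holomorphic near $L$, i.e. that $L$ is a $C$-compact; so it is enough to establish $R(L)=C(L)$ for suitably chosen $U$.

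For assertion $i)$ this is immediate and requires no care in the choice of $U$: if $K$ has zero Lebesgue measure then so does each local piece $L$, and the Hartogs--Rosenthal theorem asserts that $R(L) = C(L)$ for every compact $L \subset \C$ of planar measure zero.

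For assertion $ii)$ the local pieces automatically inherit empty interior, and the substantive point is to control their complementary components. Granting that $U$ can be chosen so that $\C \setminus L$ has only finitely many connected components, one concludes by Mergelyan's theorem in its form for finitely many complementary components: any $f \in C(L)$ holomorphic on the interior of $L$ lies in $R(L)$, and since $L$ has empty interior this hypothesis is vacuous, giving $R(L) = C(L)$.

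\textbf{Main obstacle.} The crux is precisely the geometric lemma needed for $ii)$: a priori, intersecting $K$ with a small disk could manufacture infinitely many complementary components, even though $X \setminus K$ has finitely many. The key observation is that any component of $\overline{U} \setminus K$ whose closure avoids $\partial U$ has boundary contained in $K$, and is therefore a full component of $X \setminus K$; hence there are at most $m := \#\{\text{components of } X \setminus K\}$ of these, and the difficulty is confined to components meeting $\partial U$, which may reconnect outside $U$. I would rule out infinitely many of those by choosing the radius of the coordinate disk generically, so that its bounding circle meets $K$ in a controlled way and each of the finitely many global components contributes only finitely many pieces touching $\partial U$. An alternative that sidesteps localization entirely is to settle the sphere first---where $ii)$ is a direct application of Mergelyan's theorem to $K \subset \rs$ with its finitely many complementary components---and then to reduce a general compact Riemann surface $X$ to $\rs$ via a nonconstant meromorphic map $X \to \rs$.
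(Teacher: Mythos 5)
Part $i)$ of your proof is correct and is essentially the paper's own argument: localize via Remark \ref{rem:localization} and quote a planar approximation theorem (the paper cites Vitushkin's theorem; Hartogs--Rosenthal indeed suffices for sets of measure zero). For part $ii)$ your route is genuinely different from the paper's: the paper does not localize at all, but invokes Scheinberg's theorem on uniform approximation on open Riemann surfaces, applied to $M = \s$ with a closed disk deleted from each of the finitely many components of $\s - K$ (so that no component of $M \setminus K$ is relatively compact in $M$); this gives approximation by functions holomorphic on $M$, hence on a neighborhood of $K$. Your localization-plus-Mergelyan strategy is a legitimate and more elementary alternative, relying only on planar approximation theory, but as written it has a gap at exactly the point you flag as the crux.

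The gap: you assert that a generic choice of radius makes $\C \setminus L$, $L = K \cap \overline{U}$, have finitely many components, but the proposed mechanism (controlling $\partial U \cap K$ so that each global component contributes finitely many pieces touching $\partial U$) is not a proof, and it is aimed at the wrong decomposition: for Mergelyan what matters is the components of $\rs \setminus L$, not of $\overline{U} \setminus K$, and a single component of $\s \setminus K$ may well meet $U$ in infinitely many pieces. Fortunately no genericity is needed; the finiteness holds for \emph{every} coordinate disk. Indeed, let $V$ be a component of $\rs \setminus L$ other than the component $V_\infty$ containing $\rs \setminus \overline{U}$. If $V$ contained a point $p \in \partial U$, then $p \notin K$, so a small ball around $p$ lies in $\rs \setminus L$ and meets $\rs \setminus \overline{U}$, forcing $V = V_\infty$; hence $V \subset U$, so $\partial V \subset L \subset K$, and $V$ is open, connected and closed in $\s \setminus K$, i.e.\ $V$ is exactly one of the $m$ components of $\s \setminus K$. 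Thus $\rs \setminus L$ has at most $m+1$ components, and Mergelyan's theorem in its finitely-connected form gives $R(L) = A(L) = C(L)$, the last equality because $L$ has empty interior. With this lemma inserted, your proof of $ii)$ is complete. I would drop the suggested fallback via a nonconstant meromorphic map $\s \to \rs$: such a map is a branched cover, pullbacks of rational functions do not generate $C(K)$, and the image of $K$ need not have empty interior or finitely many complementary components, so no reduction to the sphere is obtained that way.
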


\begin{proof}
	Those conditions have been observed to imply that $K$ is a $C$-compact 
	in \cite{makienko2005remarks} and \cite{levin2011perturbations} in the 
	case where $X=\rs$. The following are immediate adaptations to the general case of a 
	compact Riemann surface $X$.
	\begin{itemize}
		\item[$i)$] This follows from the local nature of being a $C$-compact
		(remark \ref{rem:localization}) and Vitushkin's theorem 
		(see e.g. \cite{gamelin2005uniform}).
		\item[$ii)$] This follows from \cite{scheinberg1978uniform}, by taking $M$ 
		to be $X$ with a closed disk removed from every connected component of $X-K$.
	\end{itemize}
\end{proof}

\begin{theo}\label{th:makccomp}
	Let $K \subset \s$ be a $C$-compact, and let $q$ be a regular quadratic differential supported in $K$.
	Then $q=0$ Lebesgue a.e.
\end{theo}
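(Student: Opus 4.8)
The plan is to show that regularity of $q$, combined with the $C$-compact property of $K$, forces $\dbar q = 0$, and then to invoke the consequence of Weyl's lemma recalled above (a regular quadratic differential whose $\dbar$ is supported in a compact set is holomorphic off that set) to conclude that $q$ is globally holomorphic, and finally to use that $K$ has empty interior to deduce $q \equiv 0$.

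First I would record that $\mathrm{supp}\,(\dbar q) \subset K$. Indeed, if $\xi$ is a smooth vector field vanishing on a neighborhood of $K$, then $\dbar \xi$ also vanishes there, while $q$ vanishes Lebesgue a.e.\ on $\s - K$; hence $\langle \dbar q, \xi\rangle = \int_\s q \cdot \dbar \xi = 0$. In particular $\dbar q$, as an element of $\ddf$, depends only on the restriction $\xi_{|K}$ and is continuous for the uniform norm $\sup_K \|\xi\|$; that is, it factors through the space of continuous sections of $T\s$ over $K$.

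The key step is to check that $\dbar q$ annihilates every vector field that is holomorphic in a neighborhood of $K$. Let $\xi$ be a smooth vector field on $\s$ which is holomorphic on some open set $U \supset K$, so that $\dbar \xi = 0$ on $U$. Splitting the integral, $\int_U q \cdot \dbar \xi = 0$ because $\dbar \xi = 0$ there, while $\int_{\s - U} q \cdot \dbar \xi = 0$ because $q$ vanishes a.e.\ off $K \subset U$; hence $\langle \dbar q, \xi \rangle = 0$. Now, since $K$ is a $C$-compact, Remark \ref{rem:localization} allows us to replace functions by sections of $T\s$: the restrictions to $K$ of vector fields holomorphic near $K$ are dense, for the uniform norm on $K$, in the continuous sections of $T\s$ over $K$. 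Because $\dbar q$ factors through these continuous sections, is continuous, and vanishes on the dense subspace just described, we conclude $\dbar q = 0$.

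Once $\dbar q = 0$, its support is empty, so the consequence of Weyl's lemma gives that $q$ is holomorphic on all of $\s$. Since a $C$-compact has empty interior, $\s - K$ is a nonempty open subset on which $q$ vanishes (it vanishes a.e.\ there and is now continuous), so by the identity theorem on the connected surface $\s$ we get $q \equiv 0$, and in particular $q = 0$ Lebesgue a.e. The main obstacle is the duality argument of the key step: one must be careful that $\dbar q$ genuinely descends to a continuous functional on continuous sections of $T\s$ over $K$, and that the $C$-compact hypothesis may be applied to sections of the tangent bundle rather than to scalar functions, which is exactly what Remark \ref{rem:localization} provides.
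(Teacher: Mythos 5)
Your proof is correct and follows essentially the same route as the paper: test $\dbar q$ against vector fields holomorphic near $K$, use the $C$-compact property (via Remark \ref{rem:localization}) and density to conclude $\dbar q = 0$, then apply Weyl's lemma and the fact that $K$ has empty interior. You merely make explicit a point the paper leaves implicit, namely that $\dbar q$ is supported in $K$ and hence descends to a continuous functional on continuous sections over $K$, which is exactly what justifies the density argument.
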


\begin{proof}
	Let $q$ be a regular quadratic differential supported in a $C$-compact $K$. We shall prove that
	$\dbar q=0$ as an element of $\Gamma(TX)^*$. By definition of a $C$-compact and by Remark
	\ref{rem:localization}, any continuous vector field can be uniformly approximated on $K$ by
	restrictions of vector fields that are holomorphic in the neighborhood of $K$, so it is enough
	to test $\dbar q$ against such vector fields. Let $\xi$ be a smooth vector field on 
	$\s$ that is holomorphic on a neighborhood $U$ of $K$. Then:	
	\begin{align*}
	\langle \dbar q, \xi \rangle &= \int_{\s} q \cdot \dbar \xi \\
	\langle \dbar q, \xi \rangle &= \int_{U} q \cdot \dbar \xi + \int_{\s - U} q \cdot \dbar \xi
	\end{align*}
	Since $q$ is supported in $K$, we have $\int_{\s - U} q \cdot \dbar \xi=0$.
	Since $\xi$ is holomorphic on $U$, we have $\int_{U} q \cdot \dbar \xi =0$.
	Therefore $\langle \dbar q, \xi \rangle =0$ and so $\dbar q=0$.
	So by Weyl's lemma,
	$q$ is holomorphic on $\s$ (up to a set of Lebesgue measure zero), and vanishes on the 
	(non-empty) open set $\s-K$, so 
	$q=0$ Lebesgue-a.e.
\end{proof}

Recall the following fundamental fact:

\begin{prop}[see \cite{epstein1993towers}, Corollary 8 p. 124]\label{prop:injectnablaf}
	Let $f: W \to X$ be a non-exceptional finite type analytic map, with $W \subset X$.
	Let $A \subset X$ be a finite set. Then if $q \in Q(X,A)$ and $q=f_*q$, then $q=0$.
\end{prop}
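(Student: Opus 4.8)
The plan is to prove the contrapositive: I will show that a \emph{nonzero} integrable meromorphic quadratic differential $q\in Q(X,A)$ with $f_*q=q$ can exist only when $f$ is exceptional. The whole argument rests on the fact that pushforward contracts the $L^1$-norm and that the fixed points of this contraction are governed by an invariant line field. First I would recall that, as already noted in the excerpt, the pushforward of an integrable quadratic differential is again integrable; moreover $f_*$ does not increase $L^1$-mass, because pointwise $|f_*q|\le f_*|q|$ (triangle inequality over the inverse branches) and $\int_X f_*|q|=\int_W|q|\le\int_X|q|$ (change of variables through the branches of $f$). Hence $\|f_*q\|_{L^1}\le\|q\|_{L^1}$.

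Since $q=f_*q$, this inequality is forced to be an equality, and I would extract information from the place where the estimate could be lossy. Equality in $|f_*q|\le f_*|q|$ means that for almost every $w$ the complex numbers $g_i^*q(w)$ attached to the distinct inverse branches $g_i$ of $f$ all share the same argument, so no cancellation occurs. A direct computation in local coordinates translates this into the statement that the horizontal line field $\nu_q$ of $q$ (the measurable field of directions along which $q>0$) is \emph{invariant} under $f$, i.e. $Df(z)\cdot\nu_q(z)=\nu_q(f(z))$ wherever defined. The essential upgrade, which uses that $q$ is meromorphic, is that off the discrete set $A\cup\{q=0\}$ the differential $q$ is holomorphic and nonvanishing, so $\nu_q$ is in fact real-analytic there; the almost-everywhere invariance relation between two real-analytic line fields then holds identically.

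Next I would localize the invariant line field $\nu_q$ at repelling periodic points, choosing a repelling cycle avoiding the finite set $A$ and the zeros of $q$ (possible since such cycles are dense in $\julia(f)$ and infinite in number). In a linearizing coordinate $\zeta$ for $g=f^p$ one has $g(\zeta)=\rho\zeta$ with $|\rho|>1$, and invariance of $\nu_q$ at $\zeta=0$ forces the multiplier $\rho$ to be real; writing $q=h(\zeta)\,d\zeta^2$, invariance further forces $h(\rho\zeta)/(\rho^2 h(\zeta))$ to be a positive real holomorphic function, hence constant, which upon comparing Taylor coefficients makes $h$ constant. Thus $q$ is \emph{flat and nonvanishing} in linearizing coordinates at every such cycle, and all these multipliers are real.

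The remaining, and genuinely hardest, step is the global rigidity: the nonvanishing invariant line field $\nu_q$, being real-analytic and flat at a dense set of repelling cycles with real multipliers, equips $X$ with an $f$-invariant flat (translation/affine) structure. I would argue, following Thurston and Epstein, that a finite type map carrying such an invariant structure must be an automorphism of $X$, an endomorphism of a complex torus, or a flexible Lattès example --- precisely the exceptional maps, for which the invariant integrable meromorphic quadratic differentials are exactly the flat ones (on the sphere, descending from $d\zeta^2$ on a torus to a differential with simple poles at the critical values, consistent with membership in $Q(X,A)$). Since $f$ is assumed non-exceptional, this is a contradiction, and therefore $q=0$. \textbf{The main obstacle} is exactly this last step: passing from local flatness and reality of multipliers to the global classification is the content of Thurston's infinitesimal rigidity, and it is where non-exceptionality is indispensable. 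Everything preceding it is soft analysis, but matching the invariant flat structure against the list of exceptional maps requires the full classification and is the delicate part I expect to carry the real weight of the proof.
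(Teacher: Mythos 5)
The first thing to say is that the paper does not prove this proposition at all: it is imported from Epstein's thesis (Corollary 8, p.~124) as an external input, so there is no in-paper argument to measure yours against. Your strategy is the classical Thurston--Douady--Hubbard contraction argument that Epstein generalizes, and its first stages are essentially right: $\|f_*q\|_{L^1}\le\|q\|_{L^1}$, equality forces the branch contributions $g_i^*q$ to be positively aligned, hence $f^*q=\lambda\, q$ on $W$ with $\lambda$ pointwise positive; since the ratio of two meromorphic quadratic differentials is a meromorphic function, positivity makes $\lambda$ locally constant at once (your detour through linearizing coordinates at repelling cycles proves the same thing, and correctly extracts that $\lambda>0$ forces real multipliers). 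One small omission: equality in $\int_W|q|\le\int_X|q|$ also forces $q=0$ a.e.\ on $X\setminus W$, which matters when $W\subsetneq X$.

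The genuine gap is your last step, and it is not only that the classification is asserted rather than proved --- as phrased, the statement you appeal to is too weak to be true. \emph{Every} Lattès map with orbifold signature $(2,2,2,2)$, including the rigid ones arising from complex multiplication $z\mapsto az$ with $a\notin\R$, carries an $f$-invariant flat structure and a meromorphic invariant line field; those maps are \emph{not} exceptional in this paper's sense, and the proposition claims $q=0$ for them. What actually excludes them is precisely the positivity you obtained earlier and then discarded: for such a map $f^*q=a^2q$ and $f_*q=(\bar a/a)q$, so $f_*q=q$ (equivalently $\lambda=\deg f=|a|^2$) holds only when $a$ is real, i.e.\ only in the flexible case. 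So the classification you need is of finite type maps that are locally affine \emph{with real derivative} for an invariant flat structure, and the positive constant $\lambda$ must be carried into the divisor computation $\mathrm{div}(q)=f^*\mathrm{div}(q)+2\,(\text{ramification divisor})$, which is what forces the parabolic orbifold and, combined with $\lambda=\deg f$, the flexible Lattès / torus / automorphism trichotomy. Two further points your sketch does not engage, and where Epstein's proof does real work beyond the rational case: $f$ may have infinite degree (so the normalization $\lambda=\deg f$ by integration must be rethought), and $W$ may be disconnected (so $\lambda$ is a priori only locally constant). As it stands, the proposal reduces the proposition to a classification statement that is false without the positivity hypothesis and unproved with it.
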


We will now investigate what happens if we relax the assumption that $q$ is meromorphic.
The following result will be needed:

\begin{theo}[\cite{epstein1993towers}]\label{th:classification}
	Let $U$ be a non-escaping Fatou component for $f$. Then $U$ is eventually mapped to a periodic 
	component which is
	is either an attracting basin, a parabolic basin, a Herman ring or a Siegel disk.
\end{theo}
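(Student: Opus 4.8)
The plan is to follow the classical Fatou--Sullivan strategy for classifying periodic Fatou components of rational maps, adapting each step to the finite type setting; since the theorem is Epstein's, I will only sketch the architecture of such an argument. First I would reduce to the periodic case using the absence of wandering domains (already established by Epstein): because $\{f^n(U)\}_{n \geq 0}$ cannot be an infinite sequence of pairwise distinct components, the forward orbit of $U$ eventually enters a periodic cycle. As $U$ is non-escaping, every component in this cycle lies in $W$ and consists of non-escaping points, so I may replace $U$ by a periodic component of some period $p$ and work with the first-return map $g = f^p : U \to U$, which is a well-defined holomorphic self-map since all iterates of $U$ remain in $W$. On $U$ the family $\{g^n|_U\}$ is normal, and the entire classification is extracted from its limit functions.

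The core is then a dichotomy on the set of limit functions $h = \lim g^{n_k}|_U$. If some limit function is non-constant, then $g$ acts on $U$ as a holomorphic automorphism conjugate to an irrational rotation: one shows that $g|_U$ is injective (a non-constant limit forces the local degree to be one), that the limit functions generate a one-parameter group of rotations, and hence that $U$ is conformally either a disk, giving a Siegel disk, or an annulus, giving a Herman ring, the topological type distinguishing the two. If instead every limit function is constant, let $a$ be such a constant value. When $a \in U$, it is an attracting (possibly superattracting) fixed point of $g$ and $U$ is its immediate basin; when $a \in \partial U$, the Snail Lemma shows that $a$ is a fixed point with multiplier a root of unity, so $U$ is a parabolic (Leau) basin. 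This produces exactly the four listed types.

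The hard part, and the place where the finite type hypotheses genuinely enter, is controlling the limit functions near $\partial U$ and ruling out the transcendental phenomenon of Baker-type domains, in which orbits would converge to an essential singularity and hence escape. Here the non-escaping hypothesis is essential: it guarantees that every constant limit value $a$ lies in the compact surface $X$ and is an honest (pre)periodic point of $f$ rather than an ideal boundary point, so that the Snail Lemma and the local linearization arguments apply. A secondary technical point is that $X$ is an arbitrary compact Riemann surface rather than $\rs$; but the normal forms used (linearization at an attracting fixed point, the parabolic flower at a multiplier-one fixed point, and the rotation models) are local, so they transport verbatim through a local coordinate, and the only global input needed is the finiteness of $S(f)$, which keeps the first-return map of finite type and its iterates a normal family.
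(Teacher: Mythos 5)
The first thing to note is that the paper does not prove this statement at all: it is imported verbatim from Epstein's thesis \cite{epstein1993towers}, so there is no in-paper argument to compare yours against. Your outline does follow the correct architecture (the Fatou--Sullivan scheme: kill wandering domains, pass to a periodic component, run the dichotomy on limit functions of the first-return map), and the reduction steps and the rotation-domain/attracting/parabolic trichotomy are standard and fine.

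There is, however, a genuine gap at exactly the point you label ``the hard part,'' and your proposed fix for it is wrong. You claim that the non-escaping hypothesis guarantees that a constant limit value $a$ is ``an honest (pre)periodic point of $f$ rather than an ideal boundary point.'' It does not. Non-escaping only says that every point of $U$ has its entire forward orbit in $W$; it in no way prevents the orbits from accumulating at a point of $X \setminus W$, i.e.\ at a singularity of $f$ where $f$ is not defined and the Snail Lemma cannot be invoked. The standard counterexample to your reasoning is an entire map viewed as $f : \C \to \rs$ with $W = \C \subset X = \rs$: a classical Baker domain, in which orbits tend to the essential singularity at $\infty$, consists entirely of non-escaping points in the paper's sense, so Baker domains are not excluded by the non-escaping hypothesis. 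Ruling them out is the substantive new content of the classification for finite type maps, and it requires an actual argument exploiting the finiteness of $S(f)$ (an expansion estimate near the essential boundary, in the spirit of Eremenko--Lyubich, or Epstein's version of it), not the hypothesis you cite. A secondary, smaller omission: on a general compact $X$ you must also rule out that a component carrying an infinite-order automorphism is conformally $\C$, $\C^*$, or a torus rather than a disk or annulus; this uses the non-emptiness of the Julia set and hyperbolicity of $U$, and deserves at least a sentence.
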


\begin{defi}
	A rotation annulus for $f$ is 
	a connected component of $\Omega_f$ which is an annulus of finite modulus and on which 
	the dynamics of $f$ is conjugate to an irrational rotation.
	
	A cycle of rotation annuli for $f$ of period $p$
	is a family of components $(\acal, \ldots, f^{p-1}(\acal))$ of $\Omega_f$
	which are all rotation annuli for $f^p$.
\end{defi}



To each rotation annulus $\acal$, we may canonically associate a quadratic differential $q_\acal$
in the following way: let $\phi : \acal \to \C$ be a linearizing coordinate for $f$ on $\acal$, mapping
$\acal$ to a straight annulus $A(R)=\{1 < |z| < R\}$. 
Let 
\begin{equation}
	q_\acal = \phi^* \left(\frac{dz^2}{z^2}\right).
\end{equation}

One can easily check using Laurent series 
that $\frac{dz^2}{z^2}$ is up to scalar multiplication the only holomorphic quadratic differential on 
$Q(A(R))$ that is rotation-invariant: in particular, there are no 
rotation invariant quadratic differential that are integrable near $0$.
 Therefore $q_\acal$ is the only integrable
holomorphic quadratic differential on $Q(\acal)$ that is forward-invariant under $f$.
We can extend it by zero outside of $\acal$ to obtain a forward invariant quadratic differential
in $Q(\Omega_f)$, that we still denote by $q_\acal$.

Similarly, if $(A, \ldots, f^{p-1}(A))$ is a cycle of rotation rings for $f$,
then we get a quadratic differential $\tilde{q}_\acal \in Q(\acal)$ that is invariant under $f^p$.
It is then easy to check that $q_\acal := \sum_{k=0}^{p-1} f_*^k \tilde{q}_\acal$ 
is forward invariant under $f$, and it is (up to scalar multiplication) the only one on $Q(\acal \cup \ldots \cup f^{p-1}(\acal))$.

%
%

\begin{prop}\label{prop:suppjuliadiffquadinv}
	Let $f: W \to X$ be a finite type analytic map, with $W \subset X$. Then the only quadratic differentials
	on $Q(\Omega_f)$ invariant by $f_*$ are those described above.
\end{prop}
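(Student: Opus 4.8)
The plan is to decompose $q$ over the grand orbits of the connected components of $\Omega_f$ and to treat each grand orbit separately. Since the pushforward operator $f_*$ sums over inverse branches and therefore does not mix distinct grand orbits, if $q \in Q(\Omega_f)$ satisfies $f_* q = q$, then the restriction of $q$ to the components belonging to a single grand orbit is again integrable, holomorphic on $\Omega_f$, and $f_*$-invariant. Thus I may assume $q$ is supported on one grand orbit. By the absence of wandering domains (recalled in the introduction) together with the classification of non-escaping Fatou components (Theorem \ref{th:classification}), such a grand orbit either escapes, or is eventually periodic with the periodic cycle being an attracting basin, a parabolic basin, a Siegel disk, or a rotation annulus.

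The key mechanism is an $L^1$ mass-reproduction argument. In the attracting and parabolic cases the grand orbit carries a dynamically defined coordinate --- a Koenigs coordinate near an attracting cycle, a Fatou coordinate in a parabolic petal --- in which the relevant iterate $f^p$ becomes $w \mapsto \lambda w$ or $w \mapsto w+1$. Writing $q = h(w)\, dw^2$, invariance $f^p_* q = q$ becomes the functional equation $h(w) = \lambda^2 h(\lambda w)$ in the attracting case and $h(w) = h(w-1)$ (that is, $h$ is $1$-periodic) in the parabolic case. A direct change of variables then shows that the mass $\int |q|$ is the same on each of the infinitely many disjoint fundamental domains $\{|\lambda|^{n+1} \le |w| \le |\lambda|^n\}$, respectively $\{n \le \Re w \le n+1\}$, that accumulate on the cycle (respectively on the parabolic point). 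Since infinitely many such domains carry equal mass, $\int_{\Omega_f} |q| = +\infty$ unless that common mass vanishes, i.e. unless $q \equiv 0$ on the whole grand orbit; the escaping case is handled by the same reproduction of mass along the infinite grand orbit. Conceptually this is just the statement that the grand-orbit quotient is a compact torus (attracting) or a cylinder (parabolic), that $f_*$-invariant integrable quadratic differentials are pullbacks of quadratic differentials on this quotient, and that the covering has infinite degree, so a nonzero pullback cannot be integrable.

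For a Siegel disk the quotient is non-Hausdorff and the mass argument breaks down, so I would argue directly: in the linearizing coordinate $f^p$ is an irrational rotation $z \mapsto \mu z$, and invariance forces the Taylor coefficients $c_n$ of $h$ to satisfy $c_n(1-\mu^{n+2}) = 0$; since $\mu$ is not a root of unity, $\mu^{n+2} \ne 1$ for all $n \ge 0$, so all $c_n$ vanish and $q \equiv 0$ on the disk component containing the centre. Any annular component of $\Omega_f$ cut out of a rotation domain by invariant circles lying in $\Lambda_f$ is itself a rotation annulus, hence accounted for below rather than being a new phenomenon. Finally, on a rotation annulus the computation recalled just before the statement shows that the space of integrable holomorphic quadratic differentials invariant under $f^p$ is exactly $\C\, q_\acal$, and summing over the cycle yields precisely the differentials $q_\acal$ described above. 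Combining the four cases, the only $f_*$-invariant elements of $Q(\Omega_f)$ are the linear combinations of the $q_\acal$.

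The main obstacle is the rotation-domain dichotomy together with the rigour of the mass computation. The reproduction argument must be carried out where $f$ is genuinely branched (critical points inside attracting or parabolic basins) and where $\Omega_f$ has the grand orbit of $S(f)$ removed: these punctures are $|q|$-null and the critical points lie on a measure-zero set, so they do not affect the integrals, but one must check that the fundamental domains can be chosen close enough to the cycle to avoid critical values and that the functional equation genuinely controls $q$ across them, and that the $f_*$-invariant $q$ really descends to the quotient. The delicate point, and precisely the reason rotation annuli survive as the unique exceptions, is that for Siegel disks and Herman rings the grand-orbit quotient fails to be Hausdorff, so one cannot invoke the covering argument and must instead rely on the explicit rotation-invariance analysis --- which yields $0$ for the disk topology but a one-dimensional space for the annular topology.
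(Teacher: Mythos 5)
Your overall strategy --- reduce to a single grand orbit, kill the $|q|$-mass on attracting, parabolic and escaping orbits by an $L^1$ argument, and handle rotation domains by an explicit rotation-invariance computation --- is sound and close in spirit to the paper's proof, which runs the same case analysis but phrases the mass argument through the invariance of the measure $|q|$ (a recurrence-type argument for attracting and parabolic basins, and pairwise disjoint preimages of equal mass for strictly preperiodic rotation components). The genuine problem is the step where you translate $f^p_*q=q$ into the functional equations $h(w)=\lambda^2h(\lambda w)$, $h(w)=h(w-1)$ and $c_n(1-\mu^{n+2})=0$. The pushforward at a point $y$ is a \emph{sum over all inverse branches}, $f^p_*q(y)=\sum_i g_i^*q(y)$, and a point near an attracting or indifferent periodic point generically has further preimages elsewhere in the same grand orbit (already for $z\mapsto z^2+c$ with $c$ small, a point near the attracting fixed point $\alpha$ has a second preimage near $-\alpha$, also in the basin). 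Your equations retain only the branch fixing the linearizing neighbourhood, i.e.\ they are the \emph{pullback}-invariance relations, which is not the hypothesis. Consequently the claimed equality of masses of consecutive fundamental domains does not follow as stated, and the Taylor-coefficient computation on a Siegel disk is not justified as written.

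The repair is standard but has to be said. From $|q|=|f_*q|\le f_*|q|$ together with $\int_X f_*|q|=\int_W|q|=\int_X|q|$ (recall $\Omega_f\subset W$, so $|q|$ puts no mass outside $W$) one gets $f_*|q|=|q|$ as measures, i.e.\ $|q|(f^{-1}(E))=|q|(E)$. Applied to your fundamental annuli this yields $|q|(A_{n+1})=|q|\bigl(f^{-p}(A_{n+1})\bigr)\ge|q|(A_n)$ --- monotonicity rather than equality, since $f^{-p}(A_{n+1})$ contains $A_n$ and possibly more --- and a non-decreasing summable sequence vanishes, which rescues the attracting and parabolic cases; the identity $|q|\bigl(f^{-n}(X\setminus W)\bigr)=|q|(X\setminus W)=0$ disposes of the escaping case cleanly. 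For a periodic Siegel disk or Herman ring $D$ you must first observe that $f^{-p}(D)=D\sqcup(\text{other components})$ and $|q|\bigl(f^{-p}(D)\bigr)=|q|(D)$ forces all strictly preperiodic preimage components of the rotation cycle to carry no mass --- a step your write-up skips --- and only after that does $f^p_*q$ restricted to $D$ reduce to the single rotation branch, making your Taylor and Laurent computations legitimate and leaving exactly the differentials $q_{\mathcal{A}}$.
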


\begin{proof}
%
%

	Let $q \in Q(\Omega_f)$  be an invariant quadratic differential. Then $|q|$ is an invariant measure
	on $\Omega_f$, that does not charge any escaping Fatou component.  
		
	Let $U$ be a component of $\Omega_f$ with positive mass for $|q|$, and let $V$ be 
	the non-escaping Fatou component containing $U$.
	According to Theorem \ref{th:classification}, $V$ is eventually mapped to  an attracting 
	basin, a parabolic basin, a Siegel disk or a Herman ring.
	If $V$ is mapped to an attracting or parabolic basin, then every point in $U$ converges
	to the same finite cycle of points, so the grand orbit of $V$ cannot support an invariant measure
	absolutely continuous with respect to the Lebesgue measure. Therefore $V$ must be eventually mapped to 
	either a (periodic) Siegel disk or Herman ring. Such a Fatou component can never be completely invariant,
	since it maps to itself with degree $1$. So $V$ must be in fact in the cycle: indeed, if it were not 
	the case, then the preimages $f^{-n}(V)$ would form a pairwise disjoint family of open sets, each 
	having the same mass as $V$ for $|q|$; but this would contradict the fact that $q$ is integrable.
	
	Therefore $U$ must be in a periodic rotation domain. 
	But by the preceding discussion, the only invariant quadratic differentials on such domains
	are the quadratic differentials $q_\acal$ associated to rotation annuli.
\end{proof}

%
%
%
%

\section{Proof of Theorem A}\label{sec:proofA}

\begin{defi}
	Let $f: W \to X$ be a finite type map, with $W \subset X$. 
	\begin{itemize}
		\item Let $p(f)$ denote the number of singular values with a periodic or preperiodic orbit
		\item Let $s(f)$ denote the number of summable singular values with an infinite forward orbit,
		whose $\omega$-limit sets are $C$-compacts.
		\end{itemize}
\end{defi}

\begin{theoA}
	Let $f: W \to X$ be a non-exceptional finite type analytic map, with $W \subset X$.
	We have: $$\dim \teich(f) \leq \card S(f) - p(f) - s(f).$$
\end{theoA}

\begin{proof}
	Since repelling periodic points are dense in the Julia set of $f$ (see \cite{epstein1993towers}), we 
	may chose a finite union of repelling cycles of $f$ of cardinal at least $|3g-3|$, 
	if $g \neq 1$, or of cardinal at least one if $g=1$. Let $Z$ denote such a set.
	Let $S^0(f)$ denote the set of singular values of $f$ which are periodic or preperiodic,
	and let 
	$$A=\left(\bigcup_{n \in \N} f^n(S^0(f)) \right)\cup Z.$$
	Let $B=S(f)\cup A$.
	Then $(A,B)$ are admissible, and therefore $\defab(f)$ is a complex manifold of 
	dimension $\card (B-A) = \card (S(f)) - p(f)$.
	
	Let $s \in S(f)- S^0(f)$ be a summable singular value whose $\omega$-limit set 
	is a $C$-compact.
	Let $u \in \ddf$ be a non-zero linear form with support equal to $\{s\}$. Let 
	$v_n = \sum_{k=0}^{n} f_*^k u$. The pushforwards are well-defined, since by 
	definition of summability, the orbit of $s$ does not meet critical points.
	The fact that $s$ is summable readily implies that the sequence $(v_n)_{n \in \N}$ 
	converges to some $v_s \in \ddf$. 
	Let $q_s$ be the $Z$-normalized quadratic differential on $X$ corresponding to $v_s$ 
	(see Theorem \ref{th:resolutiondbar}).
	From the definition of $v_s$, we have that $v_s - f_* v_s = u$; moreover,
	$v_s - \dbar q_s$ is supported in $Z$ (if $g=0$, or $v_s= \dbar q_s$ if $g>0$).
	Therefore, in view of Proposition \ref{prop:supportfq}, $q_s - f_* q_s$ is supported in
	$Z \cup S(f)$ (and in fact, if $g>0$, it is supported only in $S(f)$).
	Let us denote by $S^s(f)$ the set of summable singular values of $f$ whose $\omega$-limit
	sets are $C$-compacts.
	
	\begin{lem}\label{lem:qiindep}
		The quadratic differentials $(\nabla_f q_s)_{s \in S^s(f)}$ are linearly independant.
	\end{lem}

	\begin{proof}[Proof of Lemma \ref{lem:qiindep}]
		First note that the quadratic differentials $(q_s)_{s \in S^s(f)}$ are linearly
		independant, since the $\dbar q_s$ are. Next, we  prove that the vector space spanned by their restriction 
		to $\Omega_f$ is in direct sum
		with the kernel of the operator $\nabla_f : Q(\Omega_f) \to Q(\Omega_f)$.
		According to Proposition \ref{prop:suppjuliadiffquadinv}, we just need to prove that no non-trivial
		element of
		the vector space spanned by the $q_s$, $s \in S^s(f)$, can be written 
		as $\lambda \frac{dz^2}{z^2}$ in linearizing coordinates in a rotation annulus of $f$, 
		with $\lambda \neq 0$ (we may assume that the rotation annulus is fixed, up to replacing $f$
		by one of its iterates).
		So let $q=\sum_{s \in S^s(f)} \lambda_s q_s$, and let $q_\acal = \frac{dz^2}{z^2}$,
		where $z$ is a linearizing coordinate for a rotation annulus. 
		Notice that $\dbar q = \sum_{s \in S^s(f)} \lambda_s v_s$ is of the form $\mu \otimes \alpha$,
		where $\mu$ is a converging series of Dirac masses at points of the post-singular set of $f$.
		In particular, $\mu$ is a measure of dimension $0$.
		On the other hand, let us compute $\dbar q_\acal$.
		Let $\xi$ be a smooth vector field on $X$. Let us denote by $A$ the rotation annulus
		on which $q_\acal$ is supported, and let $\frac{1}{2\pi}\log R$ be its module. Then, by working in the 
		$z$-coordinates, we get:
		\begin{align*}
			\int_X q_\acal \cdot \dbar \xi &= \int_A q_\acal \cdot \dbar \xi \\
								&= \int_{1< |z|<R} \frac{dz^2}{z^2} \cdot \dbar \xi(z).
		\end{align*}
		Let $\epsilon>0$. 
		Notice that Stokes' theorem implies that:
		\begin{align*}
			\int_{1+\epsilon < |z| < R-\epsilon} \frac{dz^2}{z^2} \cdot \dbar \xi(z)
			= \int_{ |z| = R-\epsilon} \frac{dz^2}{z^2} \cdot \xi(z) - \int_{ |z| = 1+\epsilon} \frac{dz^2}{z^2} \cdot \xi(z).
		\end{align*}
		Moreover, since $\phi \cdot \dbar \xi$ is integrable, Lebesgue's dominated convergence theorem
		implies that 
		$$\lim_{\epsilon \to 0} \int_{1+\epsilon < |z| < R-\epsilon} \frac{dz^2}{z^2} \cdot \dbar \xi(z)
		 =  \int_{1< |z|<R} \frac{dz^2}{z^2} \cdot \dbar \xi(z),$$
		 so that 
		 $$\int_X \phi \cdot \dbar \xi = \lim_{\epsilon \to 0} \left(\int_{ |z| = R-\epsilon} \frac{dz^2}{z^2} \cdot \xi(z) - \int_{ |z| = 1+\epsilon} \frac{dz^2}{z^2} \cdot \xi(z) \right).$$
		 Since the pullback of the Lebesgue measure on the circles $|z|=1+\epsilon$ and $|z|=R-\epsilon$
		 converge to the harmonic measure of the closure $\overline{A}$ of $A$ when
		 $\epsilon$ tends to $0$, this means that 
		 $q_\acal$ is regular in the sense of Definition \ref{def:diffquadreg}, and that 
		 $\dbar q_\acal$ is of the form $\nu \otimes \beta$, where $\nu$ is a measure that is absolutely continuous
		 with respect to the harmonic measure of $\overline{A}$. Since the harmonic measure never has dimension
		 $0$, $\dbar q$ can never be a non-zero multiple of $\dbar \phi$, which concludes the proof that 
		 $q$ cannot be a non-zero multiple of $\phi$.
		 
		 This means that if $q$ is invariant under $f$, then $q$ must vanish on $\Omega_f$. Moreover,
		 $q$ is holomorphic outside of the union of the closure of the orbit of $S^s(f)$, so 
		 in fact $q$ vanishes outside of the union of the closure of the orbit of $S^s(f)$, which is a 
		 $C$-compact. Therefore, by Theorem \ref{th:makccomp}, $q=0$. This concludes the proof of the lemma.
	\end{proof}

	Let us now return to the proof of Theorem A.
	Recall that by construction, for any $s \in S^s(f)$, $\nabla_f q_s \in Q(X,B)$.
	It is a consequence of the previous lemma that the classes $([\nabla_f q_s])_{s \in S^s(f)}$
	are linearly independent in $Q(X,B)/\nabla_f Q(X,A)$: indeed, $\nabla_f$ is injective on 
	the vector space spanned by the $q_s, s \in S^s(f)$ and by $Q(X,A)$. Therefore no non-trivial linear 
	combination of the $\nabla_f q_s$ can be in $\nabla_f Q(X,A)$, since none of the $q_s$ are in
	$Q(X,A)$.
	This means that $\dim  \left( Q(X,B) \cap \overline{\nabla_f Q_f}\right)/\nabla_f Q(X,A) \geq s(f)$.
	But by Theorem \ref{th:immteichtodef}: 
	\begin{align*}
		\dim T_{[0]}^*\teich(f) = \dim T_{[0]}^*\defab(f) - \dim  \left( Q(X,B) \cap \overline{\nabla_f Q_f}\right)/\nabla_f Q(X,A) 
	\end{align*}
	so that 
	\begin{align*}
		\dim \teich(f) \leq \card (B-A) - s(f) = \card S(f)  - p(f) - s(f),
	\end{align*}
	which is the desired inequality.
\end{proof}

\section{Proof of Theorem B}\label{sec:proofB}

In this section, we will focus on the case where $W=X=\rs$, so that $f: W \to X$ is 
a rational map. 
We will recover from the work done in the previous sections a simpler proof 
of a result due to Levin (see \cite{levin2011perturbations}). First let us introduce some notations.

If $\lambda \mapsto [\mu_\lambda]$ is a holomorphic curve in $\defab(f)$ passing through 
the basepoint $[0]$ at $\lambda=0$, then let  
$(\phi_\lambda,\psi_\lambda,f_\lambda)$ be a corresponding holomorphic family of triples
(recall that $\phi_\lambda$ and $\psi_\lambda$ are quasiconformal homeomorphisms, and that $f_\lambda$
are rational maps of the form $f_\lambda = \phi_\lambda \circ f \circ \psi_\lambda^{-1}$). 
Then
\begin{equation}\label{eq:diffdef}
	\eta = Df^{-1} \cdot \left(\frac{d}{d\lambda}_{|\lambda=0} f_\lambda \right) = f^*\dot \phi - \dot \psi,
\end{equation} 
and $\eta$ is a meromorphic vector field on $\rs$ that depends only on $\frac{d}{d\lambda}_{|\lambda=0} [\mu_\lambda] \in T_{[0]}\defab(f)$.

Recall the following notation:

\begin{defi}
	Let $v$ be a summable critical value and $\eta \in T(f)$. Denote by:
	$$\xi_\eta(v) := \sum_{k=0}^{\infty} (f^k)^*\eta(v) = \limn \eta_n(v) \in T_v \rs.$$
\end{defi}

We now come to our second result, that we state again here for the reader's convenience:

\begin{theoB}
	Let $f:\rs \to \rs$ be a rational map that is not a flexible Lattès map, with $s$ summable critical values $v_i$, $1 \leq i \leq s$, 
	such that for all $n \in \N^*$, $f^n(v_i) \notin V_f$. 
	Assume that either $f$ has no invariant line field, or that the $\omega$-limit set of those 
	$s$ summable critical values are $C$-compacts. Then there is a germ of analytic set $\Lambda$ in 
	$\ratd$ passing through $f$ and transverse to $\mathcal{O}(f)$ on which the critical values of $f$
	move holomorphically, and such that the linear map
	\begin{align*}
	\mathcal{V} : T(f) &\rightarrow \bigoplus_{1 \leq i \leq s} T_{v_i} \rs   \\
	\eta &\mapsto (\dot v_i + \xi_\eta(v_i))_{1 \leq i \leq s}
	\end{align*}
	has maximal rank, i.e. equal to $s$.
\end{theoB}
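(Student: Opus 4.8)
The plan is to connect Theorem B to the machinery developed for Theorem A by relating the linear map $\mathcal{V}$ to the codifferential $D\Psi_T(0)^*$ computed in Theorem \ref{th:immteichtodef}. First I would choose admissible sets $(A,B)$ for $f$: take $B = S(f) \cup Z \cup \{f^n(v_i) : n \geq 0,\ 1 \leq i \leq s\}$ and $A$ accordingly, where $Z$ is a set of repelling periodic points used for normalization (here $g=0$, so $Z$ has three points). Since the critical values $v_i$ have infinite forward orbit disjoint from the critical values (the hypothesis $f^n(v_i) \notin V_f$), the sets $B$ and $A$ are well-defined up to the summability that guarantees convergence of the relevant pushforward series. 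The goal is to exhibit the germ $\Lambda$ as the image of an appropriate piece of $\defab(f)$ (or of $\teich(f)$) under the immersion into parameter space, and to identify the map $\eta \mapsto (\dot v_i + \xi_\eta(v_i))_i$ with a linear map whose rank is controlled by the dimension count from Theorem A.

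The key computational step is to make explicit the pairing between a tangent vector $\eta \in T(f)$ and the linear forms $u_i \in \Gamma(TX)^*$ supported at the critical values $v_i$, as in the proof of Theorem A. For each summable $v_i$, let $u_i = \delta_{v_i} \otimes \alpha_i$ and form $v_{s,i} = \sum_{k \geq 0} f_*^k u_i$, with $Z$-normalized quadratic differential $q_i$ so that $\nabla_f q_i \in Q(X,B)$. Lemma \ref{lem:qiindep} already establishes that the $\nabla_f q_i$ are linearly independent modulo $\nabla_f Q(X,A)$, which by the dimension count gives that the classes $[\nabla_f q_i]$ span an $s$-dimensional subspace of $Q(X,B)/\nabla_f Q(X,A) = T^*_{[0]}\defab(f)$. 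The heart of the matter is then to show that pairing $\eta$ against these classes reproduces exactly the components $\dot v_i + \xi_\eta(v_i)$: using equation (\ref{eq:diffdef}), $\eta = f^* \dot\phi - \dot\psi$, and summing the telescoping series $\sum_k (f^k)^* \eta(v_i)$ against the dual object should yield $\langle v_{s,i}, \cdot \rangle$ evaluated on the deformation, identifying the pairing with $\dot v_i + \xi_\eta(v_i)$ up to the residue normalization. This is the place where the summability is essential to guarantee both convergence of $\xi_\eta(v_i)$ and of the dual series.

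To produce the germ $\Lambda$ transverse to $\mathcal{O}(f)$, I would use the immersion $\Psi_T: \teich(f) \to \defab(f)$ together with the identification of $T_f\ratd$ with $T(f)$. Since $\mathcal{O}(f)$ corresponds to trivial Möbius deformations, a transverse slice $\Lambda$ is obtained by taking directions $\eta \in T(f)$ complementary to the image of the Möbius action; the condition that critical values move holomorphically on $\Lambda$ follows because $B$ was chosen to contain the full forward orbits of the $v_i$, so the marked-point structure of $\defab(f)$ tracks them. The rank of $\mathcal{V}$ is then at least $s$ precisely because the $s$ classes $[\nabla_f q_i]$ are linearly independent in the cotangent space, by Lemma \ref{lem:qiindep}; the hypothesis that either $f$ has no invariant line field or the $\omega$-limit sets are $C$-compacts is exactly what feeds Theorem \ref{th:makccomp} into that lemma. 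Since the image lands in an $s$-dimensional space, the rank is exactly $s$.

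The main obstacle I anticipate is the explicit identification in the second paragraph: namely verifying that the abstract pairing of a deformation tangent vector against $[\nabla_f q_i]$ in $Q(X,B)/\nabla_f Q(X,A)$ equals the concrete analytic quantity $\dot v_i + \xi_\eta(v_i) \in T_{v_i}\rs$. This requires carefully tracking residues at $v_i$ (where $q_i$ has a simple pole with residue determined by $\alpha_i$), controlling the $L^1$-convergence of the pushforward series against the vector field $\eta$, and reconciling the sign and normalization conventions coming from equation (\ref{eq:diffdef}) versus the definition of $\xi_\eta$. Once this dictionary is established, the transversality and the maximal rank assertion follow formally from the dimension bookkeeping already set up in the proof of Theorem A, so the conceptual content is entirely front-loaded into this pairing computation.
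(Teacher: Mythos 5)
Your strategy coincides with the paper's: reuse the $Z$-normalized quadratic differentials $q_i$ associated to $\sum_n f_*^n u_i$, represent the relevant linear functionals by the classes $[\nabla_f q_i]$ in $T^*_{[0]}\defab(f)$, and conclude from the linear independence supplied by Lemma \ref{lem:qiindep}. However, the step you describe as ``the heart of the matter'' --- that pairing a tangent vector $[\mu]\in T_{[0]}\defab(f)$ against $\nabla_f q_i$ produces exactly $\langle u_i,\dot\psi(v_i)+\xi_\eta(v_i)\rangle$ --- is precisely the content of the paper's Lemma \ref{lem:calculsommable}, and you have announced it rather than proved it. This is not bookkeeping that ``follows formally'': it is the main computation of the entire proof, and without it the independence of the $[\nabla_f q_i]$ says nothing about the rank of $\mathcal{V}$. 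The paper carries it out by writing $q_i=\sum_n q_{i,n}$ with $q_{i,n}$ the $Z$-normalized differential for $f_*^n u_i$, using that each meromorphic $1$-form $q_{i,n}\cdot\eta$ has vanishing total residue on $\rs$, and redistributing residues among $Z$, $\critf$ and $f^n(v_i)$ via $\eta=f^*\dot\psi-\dot\phi$ with $\dot\phi=\dot\psi$ on the marked cycle; the hypothesis $f^n(v_i)\notin V_f$ enters here to ensure that only $q_{i,0}$ contributes a pole at a critical value, and Stokes' theorem then converts the surviving residues into $\int_{\rs}\nabla_f q_i\cdot\dbar\dot\psi$. Until you perform this residue computation (including the sign and normalization issues you yourself flag), the proof is incomplete at its central step.

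A second, more structural problem: your choice $B=S(f)\cup Z\cup\{f^n(v_i):n\geq 0\}$ is not admissible, because the forward orbits of the summable $v_i$ are infinite while an admissible pair $(A,B)$ must consist of finite sets --- otherwise $\defab(f)$ is not defined, let alone finite dimensional. The paper simply takes $A$ to be a repelling cycle avoiding the critical values, $Z\subset A$ of cardinal $3$, and $B=A\cup S(f)$. Consequently your justification that the critical values move holomorphically ``because $B$ contains the full forward orbits'' cannot be repaired as stated; the correct reason is that a tangent direction of $\defab(f)$ is realized by a holomorphic family of quasiconformal conjugacies $\psi_\lambda$, so $v_i(\lambda)=\psi_\lambda(v_i)$ is automatically holomorphic. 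Relatedly, the germ $\Lambda$ is obtained in the paper as the image of the lift $\Phi_Z:\defab(f)\to\ratd$ normalized to fix $Z$ pointwise (injectivity of $D\Phi_Z([0])$ gives transversality to $\mathcal{O}(f)$), rather than by hand-picking a complement to the M\"obius directions inside $T(f)$, and the immersion $\Psi_T:\teich(f)\to\defab(f)$ plays no role in this part of the argument.
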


\begin{proof}
	Let $A$ be a repelling cycle of cardinal at least $3$ that does not 
	contain any critical value, and let $Z \subset A$ be a subset
	of cardinal exactly $3$. Let $B = A \cup S(f)$: then $\defab(f)$ is a complex manifold of 
	dimension $\card S(f)$.
	Let $\Phi: \defab(f) \to \modspace$ be the natural map from the deformation space to the moduli space,
	and let $\Phi_Z: \defab(f) \to \ratd$ be its lift to the parameter space obtained by 
	chosing quasiconformal homeomorphisms fixing $Z$ pointwise.
	Then $D\Phi_Z([0])$ takes values in $T_Z(f)$, the subspace of 
	$T(f)$ of vector fields vanishing on $Z$.
	For each summable critical value $v_i$, let $u_i$ be a non-zero element of $\ddf$ supported in
	$\{v_i\}$, and let $q_i$ be the quadratic differentials constructed in the proof of Theorem A
	(recall that $q_i$ is the $Z$-normalized quadratic differential
	 associated to $\sum_{n=0}^{\infty} f_*^n u_i$).
	
	If $\lambda \mapsto [\mu_\lambda]$ is a holomorphic curve in $\defab(f)$ tangent to $[\mu] \in T_{[0]}\defab(f)$
	at the basepoint, we can lift $\lambda \mapsto [\mu_\lambda]$ to a holomorphic curve of representatives
	$\lambda \mapsto \mu_\lambda$, and the normalized solutions $\psi_\lambda$ of the associated Beltrami
	equation
	will satisfy the property that
	$\dbar \dot \psi = \dbar \frac{d}{d\lambda}_{\lambda=0} \psi_\lambda$ is a representative of $[\mu]$.

	We will prove that
	that the linear map:
	\begin{align*}
		\mathcal{U} : T_{[0]}\defab(f) &\rightarrow \C^s   \\
		[\mu] &\mapsto (\langle u_i,\dot \psi(v_i)+\xi_\eta(v_i)\rangle)_{1 \leq i \leq s}
	\end{align*}
	has rank $s$.
	This will imply Theorem B. Indeed, since 
	$D\Phi_Z$ is injective, $\Phi_Z$ is a local biholomorphism onto its image, 
	and therefore defines a germ of analytic submanifold $\Lambda$ passing through $f$. It is 
	tangent to $\mathcal{O}(f)$, and the critical values of $f$ move holomorphically on
	$\Lambda$ since they are given by $v_i(\lambda)=\psi_\lambda(v_i)$.
	Additionnally, since the linear map
	\begin{align*}
		\bigoplus_{i \leq s} T_{v_i} \rs &\to \C^s\\
		(\xi_i)_{i \leq s} &\mapsto (\langle u_i, \xi_i \rangle )_{i \leq s}
	\end{align*}
	is an isomorphism, it is clear that if $\mathcal{U}$ has rank $s$ then $\mathcal{V}$ also
	has rank $s$.

	Let $\ucal_i : [\mu] \mapsto \langle u_i, \xi_\eta(v_i)\rangle$, so that $\ucal = (\ucal_i)_{i \leq s}$.
	Let $[\mu] \in T_{[0]}\defab(f)$ and $\eta = D\Phi_Z([0]) \cdot [\mu] \in T_Z(f)$.

	For $n \in \N$, denote by $q_{i,n}$ the $Z$-normalized quadratic differential associated to 
	$f_*^n u_i$: then $q_i = \sum_{n=0}^\infty q_{i,n}$ and $q_{i,n}$ is a quadratic differential 
	with exactly four poles, all simple, which are in $Z \cup \{y\}$.

	Note that
	$$2i\pi\sum_{n \in \N} \res(q_{i,n} \cdot \eta,f^n(v_i)) = \ucal_i([\mu]).$$
	
	Indeed, by definition:
	$$\langle f_*^n u_i, \eta \rangle = 2i\pi\res(q_{i,n} \cdot (f^n)^*\eta, f^n(v_i))$$
	and $\xi(v_i)=\sum_{n \geq 0} ((f^n)^*\eta)(v_i)$.

	The next lemma essentially says that the linear form $\ucal_i$ is represented by
	$\frac{1}{2i\pi}\nabla_f q_i$ in $T_{[0]}^*\defab(f)$:
	
	\begin{lem}\label{lem:calculsommable}
		Let $\mu = \dbar \dot \psi$ be a representative of $[\mu] \in T_0 \defab(f)$. Then :
		$$\int_{\rs} \nabla_f q_i \cdot \mu = \langle u_i, \dot \psi(v_i) + \xi_\eta(v_i)\rangle.$$
	\end{lem}

	\begin{proof}[Proof of lemma \ref{lem:calculsommable}]
		For all $n \in \N$, the differential form $q_{i,n} \cdot \eta$ is meromorphic on $\rs$,
		therefore the sum of its residues is null. The quadratic differential 
		$q_{i,n}$ has poles at $Z \cup \{f^n(v_i)\}$, and the vector field $\eta$ has poles at 
		$\critf$. So:
		$$\res(q_{i,n} \cdot \eta,f^{n}(v_i)) = - \sum_{z \in Z \cup \critf} \res(q_{i,n} \cdot \eta,z).$$
		
		Therefore:
		$$\sum_{n \in \N} \res(q_{i,n} \cdot \eta, f^n(v_i)) = - \sum_{n \in \N} \sum_{z  \in Z \cup C_f} \res(q_{i,n} \cdot \eta, z )$$
		
		According to equation $($\ref{eq:diffdef}$)$, we have: 
		$\eta = f^*\dot \psi - \dot \phi$, and $\dot \phi = \dot \psi$ on $Z \subset A$. 
		Therefore for all $z \in A$ and for all $n \in \N$ : 
		\begin{align*}
		\res(q_{i,n} \cdot \eta, z) &= \res(q_{i,n} \cdot (f^*\dot \psi - \dot \psi),z) = \res(f_* q_{i,n} \cdot \dot \psi,f(z))  - \res(q_{i,n} \cdot \dot \psi,z).
		\end{align*}
		Therefore:
		$$-\sum_{n \in \N} \sum_{z \in A} \res(q_{i,n} \cdot \eta, z) = \sum_{z \in A} \res(\nabla_f q_i \cdot \dot \psi,z).$$
		Moreover:
		\begin{align*}
		- \sum_{c \in \critf} \res(q_{i,n} \cdot \eta,c) &= - \sum_{c \in \critf} \res(q_{i,n} \cdot (f^*\dot \psi - \dot{\phi}),c)  \\
		&= - \sum_{c \in \critf} \res(q_{i,n} \cdot f^* \dot \psi,c) \\
		&= - \sum_{v \in V_f} \res(f_*q_{i,n} \cdot  \dot \psi,v).
		\end{align*}
		For all $n \geq 1$, since $q_{i,n}$ has no pole at any $v \in V_f$,
		we therefore have
		$$	- \sum_{c \in \critf} \res(q_{i,n} \cdot \eta,c) = \sum_{v \in V_f} \res(\nabla_f q_{i,n} \cdot \dot \psi,v).$$
		
		On the other hand, for $n=0$, $q_{i,0}$ has a pole at $v_i$, and so:
		$$	- \sum_{c \in \critf} \res(q_{i,0} \cdot \eta,c) = \sum_{v \in V_f} \res(\nabla_f q_{i,0} \cdot \dot \psi,v) - \res(q_{i,0} \cdot \dot \psi, v_i)$$
		
		Therefore:
		$$- \sum_{n \in \N} \sum_{c \in \critf} \res(q_{i,n} \cdot \eta,c) = - \res(q_{i,0} \cdot \dot \psi, v_i) + 
		 \sum_{v \in V_f} \res(\nabla_f q_{i} \cdot \dot \psi,v).$$
		To sum things up, we have:
		$$\sum_{n \in \N} \res(q_{i,n} \cdot \eta, f^n(v_i)) = -\res(q_{i,0} \cdot \dot \psi, v_i)+\sum_{z \in B} \res(\nabla_f q_i \cdot \dot \psi,z).$$
		Finally, by Stokes' theorem,
		$$2i\pi \sum_{z \in B} \res(\nabla_f q_i \cdot \dot \psi,z) = \int_{\rs} \nabla_f q_i \cdot \dbar \dot \psi,$$
		and by definition of $q_{i,0}$ (and the fact that $\dot \psi = 0$ on $Z$), 
		$$2i\pi\res(q_{i,0} \cdot \dot \psi, v_i) = \int_{\rs} q_{i,0} \cdot \dbar \dot \psi = \langle u_i, \dot \psi(v_i) \rangle.$$
		To conclude the proof of lemma \ref{lem:calculsommable}, just observe 
		that 
		$$2i\pi  \sum_{n \in \N} \res(q_{i,n}\cdot \eta, f^n(v_i)) = \sum_{n=0}^{\infty} \langle f_*^n u_i, \eta\rangle = \langle u_i, \xi_\eta(v_i)\rangle.$$
	\end{proof}
	
	Let us now return to the proof of Theorem B.
	According to the preceding lemma, we have:
	$$\ucal_i^Z([\mu]) = \frac{1}{2i\pi}\int_{\rs} \nabla_f q_i \cdot [\mu]$$ 
	for all $[\mu] \in T_{[0]}\defab(f)$. In other words, the class of the quadratic differential 
	$\frac{1}{2i\pi}\nabla_f q_i$ in $Q(\rs,B)/\nabla_f Q(\rs,A)$ represents the linear form $\ucal_i$
	in $T_{[0]}^*\defab(f)$.
	Moreover, according to lemma \ref{lem:qiindep}, the quadratic differentials $(\nabla_f q_i)_{i \leq s}$
	are linearly independent, and as in the proof of Theorem A, this implies 
	that the classes $([\nabla_f q_i])_{i \leq s}$ are linearly independent in 
	$Q(\rs,B)/\nabla_f Q(\rs,A)$.
	Therefore, the $(\ucal_i)_{i \leq s}$ are linearly independent, which proves that $\ucal$ has rank
	$s$.
\end{proof}

\bibliographystyle{alpha}
\bibliography{biblio}

\begin{thebibliography}{DMS05}

\bibitem[Ast14]{astorg2014dynamical}
Matthieu Astorg.
\newblock On the dynamical {T}eichm{\"u}ller space.
\newblock {\em arXiv preprint arXiv:1406.1464}, 2014.

\bibitem[Avi02]{avila2001infinitesimal}
Artur Avila.
\newblock Infinitesimal perturbations of rational maps.
\newblock {\em Nonlinearity}, 15(3):695--704, 2002.

\bibitem[B{\'E}E13]{buff2013limits}
Xavier Buff, Jean {\'E}calle, and Adam Epstein.
\newblock Limits of degenerate parabolic quadratic rational maps.
\newblock {\em Geom. Funct. Anal}, 23:42--95, 2013.

\bibitem[BJ04]{boivin2004uniform}
A.~Boivin and B.~Jiang.
\newblock Uniform approximation by meromorphic functions on riemann surfaces.
\newblock {\em Journal d’Analyse Math{\'e}matique}, 93(1):199--214, 2004.

\bibitem[DMS05]{dominguez2002ruelle}
Patricia Dom{\'\i}nguez, Peter Makienko, and Guillermo Sienra.
\newblock Ruelle operator and transcendental entire maps.
\newblock {\em Discrete Contin. Dyn. Syst}, 12(4):773--789, 2005.

\bibitem[Eps93]{epstein1993towers}
Adam~L. Epstein.
\newblock {\em Towers of finite type complex analytic maps}.
\newblock PhD thesis, City University of New York, 1993.

\bibitem[Eps09]{epstein2009transversality}
Adam~L. Epstein.
\newblock Transversality in holomorphic dynamics.
\newblock {\em {M}anuscript available on http://www.warwick.ac.uk/mases}, 2009.

\bibitem[Gam05]{gamelin2005uniform}
Theodore~W. Gamelin.
\newblock {\em Uniform algebras}, volume 311.
\newblock American Mathematical Soc., 2005.

\bibitem[GL00]{gardiner2000quasiconformal}
Frederick~P. Gardiner and Nikola Lakic.
\newblock {\em Quasiconformal Teichm{\"u}ller Theory}, volume~76.
\newblock AMS Bookstore, 2000.

\bibitem[Hub06]{hubbard2006teichmuller}
John~H. Hubbard.
\newblock {\em Teichm{\"u}ller theory and applications to geometry, topology,
  and dynamics}, volume~1.
\newblock Matrix Pr., 2006.

\bibitem[Lev02]{levin2002analytic}
Genadi Levin.
\newblock On an analytic approach to the fatou conjecture.
\newblock {\em Fund. Math}, 171:177--196, 2002.

\bibitem[Lev14]{levin2011perturbations}
Genadi Levin.
\newblock Perturbations of weakly expanding critical orbits.
\newblock {\em Princeton Math. Ser.}, 51:163--196, 2014.

\bibitem[Mak01]{makienko2001remarks}
Peter~M. Makienko.
\newblock Remarks on {R}uelle operator and invariant line field problem.
\newblock {\em arXiv preprint math/0110093}, 2001.

\bibitem[Mak05]{makienko2005remarks}
Peter~M. Makienko.
\newblock Remarks on the {R}uelle operator and the invariant line fields
  problem {II}.
\newblock {\em Ergodic Theory and Dynamical Systems}, 25(05):1561--1581, 2005.

\bibitem[Mak10]{makienko2008remarks}
Peter~M. Makienko.
\newblock Remarks on the dynamic of the ruelle operator and invariant
  differentials.
\newblock {\em Dal nevost. Mat. Zh.}, pages 180--205, 2010.

\bibitem[MS98]{mcmullen1998quasiconformal}
Curtis~T. McMullen and Dennis~P. Sullivan.
\newblock Quasiconformal homeomorphisms and dynamics {III}: the
  {T}eichm{\"u}ller space of a holomorphic dynamical system.
\newblock {\em Advances in Mathematics}, 135(2):351--395, 1998.

\bibitem[MS06]{makienko2005poincare}
Peter Makienko and Guillermo Sienra.
\newblock Poincar{\'e} series and instability of exponential maps.
\newblock {\em Bol. Soc. Mat. Mexicana}, (2):213--228, 2006.

\bibitem[PR99]{przytycki1999collet}
Feliks Przytycki and Steffen Rohde.
\newblock Rigidity of holomorphic collet-eckmann repellers.
\newblock {\em Arkiv för Matematik}, 37:357--371, 1999.

\bibitem[Sch78]{scheinberg1978uniform}
Stephen Scheinberg.
\newblock Uniform approximation by functions analytic on a riemann surface.
\newblock {\em Annals of Mathematics}, 108(2):257--298, 1978.

\bibitem[Tsu00]{tsujii2000simple}
Masato Tsujii.
\newblock A simple proof for monotonicity of entropy in the quadratic family.
\newblock {\em Ergodic Theory and Dynamical Systems}, 20(03):925--933, 2000.

\bibitem[UZ07]{urbanski2007instability}
Mariusz Urba{\'n}ski and Anna Zdunik.
\newblock Instability of exponential collet-eckmann maps.
\newblock {\em Israel Journal of Mathematics}, 161(1):347--371, 2007.

\bibitem[vS00]{van2000misiurewicz}
Sebastian van Strien.
\newblock Misiurewicz maps unfold generically (even if they are critically
  non-finite).
\newblock {\em Fund. Math}, 163(1):39--54, 2000.

\end{thebibliography}

\end{document}